
\documentclass[12pt]{amsart} 
\usepackage[mathscr]{eucal} 
\usepackage{amsmath,amsfonts} 
\parskip=\smallskipamount 
\hoffset -2cm 
\voffset -1cm 
\textwidth 16.7truecm 
\textheight 23.5truecm 
\newtheorem{theorem}{Theorem}[section] 
\newtheorem{proposition}[theorem]{Proposition} 
\newtheorem{corollary}[theorem]{Corollary} 
\newtheorem{lemma}[theorem]{Lemma} 
\theoremstyle{definition} 
 
\newtheorem{example}[theorem]{Example} 
\newtheorem{examples}[theorem]{Examples}
 
\newtheorem{remark}[theorem]{Remark} 

%
%
\makeatletter 
\@addtoreset{equation}{section} 
\makeatother

\makeatletter
\@namedef{subjclassname@2020}{\textup{2020} Mathematics Subject Classification}
\makeatother

\newcommand{\CC}{{\mathbb C}} 
\newcommand{\NN}{{\mathbb N}}

\newcommand{\cA}{{\mathcal A}} 
\newcommand{\cB}{{\mathcal B}} 
\newcommand{\cC}{{\mathcal C}} 
 
\newcommand{\cE}{{\mathcal E}} 
\newcommand{\cF}{{\mathcal F}} 
 
\newcommand{\cH}{{\mathcal H}} 
\newcommand{\cI}{{\mathcal I}}
\newcommand{\cJ}{{\mathcal J}} 
\newcommand{\cK}{{\mathcal K}} 
\newcommand{\cL}{{\mathcal L}}

\newcommand{\cR}{{\mathcal R}} 
\newcommand{\cS}{{\mathcal S}}

\newcommand{\fk}{\mathbf{k}}


\newcommand{\ra}{\rightarrow} 
\newcommand{\ol}{\overline}

\newcommand{\tr}{\operatorname{tr}} 
\let\phi=\varphi 
\newcommand{\iac}{\mathrm{i}}

\newcommand{\lin}{\operatorname{Lin}}

\newcommand{\tl}{\tilde}


 \begin{document} 
\title[Automatic Boundedness of Adjointable Operators]{Automatic Boundedness of Adjointable Operators on  Barreled VH-Spaces}\thanks{}
 
 \date{\today}
 
 \author[S. Ay]{Serdar Ay}
 \address{Department of Mathematics, At{\.i}l{\.i}m University, 06830 {\.I}ncek, Ankara, 
Turkey}
 \email{serdar.ay@atilim.edu.tr}
 

\begin{abstract} 
We consider the space of adjointable operators on barreled VH (Vector Hilbert) spaces and show that such operators are automatically bounded. This generalizes the well known corresponding result for locally Hilbert $C^*$-modules. We pick a consequence of this result in the dilation theory of VH-spaces and show that, when barreled VH-spaces are considered, a certain boundedness condition for the existence of VH-space linearisations, equivalently, of reproducing kernel VH-spaces, is automatically satisfied.
\end{abstract} 

\subjclass[2020]
{Primary 46A08, 47L05, 47L10; Secondary 06F30, 43A35, 46L89, 47A20}
\keywords{ordered $*$-space, admissible space, VH-space, barreled VH-space, adjointable operators, bounded operators,  
positive semidefinite kernel, invariant kernel, linearisation, reproducing kernel,
$*$-representation, locally $C^*$-algebra, Hilbert locally 
$C^*$-module.}
\maketitle 
 
\section*{Introduction}

VE (Vector Euclidean) and VH (Vector Hilbert) spaces were introduced by Loynes in 1965, in \cite{Loynes1} and \cite{Loynes2}. These are generalisations of the notion of an inner product space, where, the "inner product" has its values in a certain ordered vector space, called "ordered $*$-space", and otherwise has the similar properties with the usual inner product, see Section \ref{s:nps} for precise definitions. He was motivated by stochastic processes \cite{Loynes3} and he obtained a generalisation of B. Sz. Nagy Dilation Theorem \cite{BSzNagy} for operator valued positive semidefinite maps on $*$-semigroups along with other results about the spectral theory of order bounded linear operators of VH-spaces, see \cite{Loynes1} and \cite{Loynes2}. The notion of VH-spaces were used in prediction theory \cite{Weron}, \cite{WeronChobanyan}, as well as in dilation theory, \cite{GasparGaspar}, \cite{GasparGaspar2}, \cite{GasparGaspar1}, \cite{Gheondea}, \cite{AyGheondea1}, \cite{AyGheondea2}, \cite{AyGheondea3} and some others. 

VH-spaces can be considered as a rather general type of spaces and indeed, it is possible to consider VH-spaces over many different ordered $*$-spaces. This is the case of Hilbert modules over $C^*$-algebras, introduced in 1973 by Paschke in \cite{Paschke}, following \cite{Kaplansky}, and independently by Rieffel in \cite{Rieffel}; more generally, Hilbert modules over locally $C^*$-algebras, introduced in 1985 by Mallios in \cite{Mallios} and later by Phillips in \cite{Phillips} , also VH-spaces over $H^*$-algebras, considered by Saworotnow in \cite{Saworotnow}, are all special cases of a VH-space. 

Operator theory of VH-spaces has been considered by various authors. Loynes singled out and studied the $C^*$-algebra $\cB^*(\cH)$ of all order bounded adjointable linear operators on a VH-space $\cH$ in \cite{Loynes2}. In \cite{Gheondea}, a Stinespring type theorem was obtained for positive semidefinite kernels with values in $\cB^*(\cH)$, generalizing the Stinespring Theorem, \cite{Stinespring} in the VH-space direction. Given the generality of VH-spaces, it is clear that more general classes of linear operators must also be given consideration, and in \cite{Ciurdariu}, \cite{PaterBinzar}, \cite{AyGheondea2}, \cite{AyGheondea3} and some others, more general classes of continuous linear operators on VH-spaces were considered. Such operators found applications, in particular, in the dilation theory of invariant positive semidefinite kernels on a set under an action of a $*$-semigroup and valued in a topologically ordered $*$-space, see \cite{AyGheondea3}. 

On the other hand, operator theory on locally Hilbert $C^*$-modules was considered in \cite{Phillips}, \cite{Zhuraev}, \cite{Joita}, more recently in \cite{Gheondea2} and probably many others. The techniques to study these operators usually come from the corresponding techniques of Hilbert $C^*$-modules, but the arguments are not always straightforward.

One of the well known results of locally Hilbert $C^*$-modules states that the class of adjointable operators $\cL^*(\cH)$ on a locally Hilbert $C^*$-module $\cH$ consists of bounded operators and is a locally $C^*$-algebra with the locally convex Hausdorff topology given by the operator seminorms, see Subsection \ref{ss:lcalhm} for a review. In \cite{AyGheondea2} it was shown that this result is responsible for controlling one of the boundedness conditions of a dilation theorem for the existence of linearisations, or equivalently, reproducing kernel spaces of operator valued positive semidefinite kernels invariant under action of $*$-semigroups. This dilation theorem leads to a Kasparov type theorem in the context of locally Hilbert $C^*$-modules, generalizing the Hilbert $C^*$-module theorem in \cite{Kasparov}, first obtained in \cite{Joita}, and proven in a different way in \cite{AyGheondea2}, using VH-space dilations and following the ideas in \cite{Murphy}. An impetus to write this article was to understand whether a similar result holds for adjointable operators of VH-spaces, or more generally, of topological VE-spaces, and if so, what conditions must be imposed on them. We show that barreledness is a sufficient condition. A related notion of m-barreledness on certain $*$-algebras was considered in \cite{Haralam}.

From that point of view, Theorem \ref{t:adopbar} is the main result of this article. Main idea of the proof consists of passing to the quotient spaces by kernels of increasing $*$-seminorms and then employ the Uniform Boundedness Theorem on a certain family of vector valued functions on the quotient spaces. In addition, VH-space tools are used. The classical result above on locally Hilbert $C^*$-modules can be recovered by a variation of Theorem \ref{t:adopbar}, see Corollary \ref{c:quotbar} and the discussion in the end of Section \ref{s:adop}. 

We now briefly describe the contents of this article. In Section \ref{s:nps} we recall definitions and some properties of topologically ordered $*$-spaces, VE-spaces, VH-spaces and their operators. We prove Lemma \ref{l:clcone}, which shows that we can always assume that the cone of a topologically ordered $*$-space is closed. We recall two Schwarz type inequalities in Lemmas \ref{l:schwarz} and \ref{l:schwarzforposop} that will be used several times in the article. Subsection \ref{ss:lcalhm} briefly reviews locally $C^*$-algebras and locally Hilbert $C^*$-modules, mainly for convenience. 

In Section \ref{s:cclo} given two topological VE-spaces $\cE$ and $\cF$ over the same topologically ordered $*$-space $Z$, we study the class $\cL^*_{c}(\cE,\cF)$ of continuous and continuously adjointable operators and $\cL^*_{b}(\cE,\cF)$ of bounded and adjointable operators. In particular, if $\cE$ is a VH-space, then $\cL^*_{b}(\cE)$ is a locally $C^*$-algebra, see Corollary \ref{c:bddadjloc}.

The main result of this article, Theorem \ref{t:adopbar}, states that, an adjointable operator $T\colon \cE \ra \cF$ from a barreled topological VE-space $\cE$ to a topological VE-space $\cF$ over the same topologically ordered $*$-space $Z$ is automatically bounded, hence continuous. Here boundedness is in a rather strong sense that becomes the usual boundedness of linear operators when normed VE-spaces are considered, see the definitions in Section \ref{s:cclo}. A series of corollaries of the main result are proven, showing that the behaviour of adjointable operators of barreled topological VE-spaces are in a certain sense close to the adjointable operators of locally Hilbert $C^*$-modules, and in the normed case, to the adjointable operators of Hilbert $C^*$-modules. 

In the last section we turn to the dilation theory of VH-spaces and prove Theorem \ref{t:barvhinvkolmo}, as an application of Theorem \ref{t:adopbar}, which asserts that, when a barreled VH-space $\cH$ is considered, a boundedness condition (b2) from a characterization of the existence of invariant VH-space linearizations (equivalently, existence of reproducing kernel VH-spaces with $*$-representations) of positive semidefinite kernels valued in adjointable operators of $\cH$, see Theorem \ref{t:vhinvkolmo2}, is satisfied automatically. This provides another partial answer to a question raised in \cite{AyGheondea2}, asking when the condition (b2) in Theorem \ref{t:vhinvkolmo2} is satisfied automatically.

\section{Notation and Preliminary Results}\label{s:nps}

\subsection{Ordered $*$-Spaces.}\label{ss:as}
A complex vector space $Z$ is called an \emph{ordered $*$-space}, see e.g. \cite{Aliprantis} or 
\cite{PaulsenTomforde} if:
\begin{itemize}
\item[(a1)] $Z$ has an \emph{involution} $*$, that is, a map $Z\ni z\mapsto z^*\in Z$ 
that is \emph{conjugate linear} 
(($s x+t y)^*=\ol s x^*+\ol t y^*$ for all 
$s,t\in\CC$ and all $x,y\in Z$) and \emph{involutive} 
($(z^*)^*=z$ for all $z\in Z$). 
\item[(a2)] In $Z$ there is a \emph{convex cone} $Z_+$ ($s x+t y\in Z_+$ 
for all numbers $s,t\geq 0$ and all $x,y\in Z_+$), that is  
\emph{strict} ($Z_+\cap -Z_+=\{0\}$), and consisting of \emph{selfadjoint elements} 
only  ($z^*=z$ for all 
$z\in Z_+$). This cone is used to define a \emph{partial order} in selfadjoint elements $Z^{h}$ by: 
$z_1  \geq z_2$ if $z_1-z_2 \in Z_+$.
\end{itemize}

Note that, the set of all selfadjoint elements $Z^{h}$ is a real vector space. 

The complex vector space $Z$ is called a \emph{topologically ordered $*$-space} if it is 
an ordered $*$-space and:

\begin{itemize}
\item[(a3)] $Z$ is a \emph{Hausdorff separated locally convex space}.
\item[(a4)] The involution $*$ is \emph{continuous} with respect to this topology.
\end{itemize}
Condition (a4) is new compared to the definition in \cite{AyGheondea2} and \cite{AyGheondea3}.
\begin{itemize}
\item[(a5)] The cone $Z^+$ 
is \emph{closed} with respect to this topology. 
\item[(a6)] The topology of $Z$ is \emph{compatible} with the partial ordering in the 
sense that there exists a base of the topology, linearly generated by a family of 
neighbourhoods $\{N_j\}_{j\in\cJ}$ of the origin, such that all of them are 
absolutely convex and 
\emph{solid}, that is, whenever $x\in N_j$ and $0\leq y\leq x$ then $y\in N_j$.
\end{itemize}
It can be proven that axiom (a6) is equivalent with the following one, see \cite{AyGheondea2}:
\begin{itemize}
\item[(a6$^\prime$)] There exists a collection of seminorms 
$\{p_j\}_{j\in \cJ}$ defining the 
topology of $Z$ that are \emph{increasing}, that is, $0\leq x\leq y$ implies 
$p_j(x)\leq p_j(y)$.
\end{itemize}
We denote the collection of all increasing continuous seminorms on $Z$ by $S(Z)$. 
$Z$ is called an \emph{admissible space} if, in addition to the axioms (a1)--(a6),
\begin{itemize}
\item[(a7)] The topology on $Z$ is complete.
\end{itemize}

If the topology of a topologically ordered $*$-space $Z$ can be given by a single increasing norm, then we call $Z$ a \emph{normed ordered $*$-space}. If the topology on a normed ordered $*$-space $Z$ is complete, then $Z$ is called a \emph{Banach ordered $*$-space}.

\begin{remark}\label{r:add}
Notice that given two increasing seminorms $p$ and $q$ on a topologically ordered $*$-space $Z$, the seminorms $p+q$ 
and $\alpha p$ for any constant $\alpha \geq 0$ are also increasing. In addition, the maximum seminorm $r(z):=\mathrm{max}\{p(z),\,q(z)\}$, 
$z\in Z$ is increasing. In particular, the collection of all continuous and increasing seminorms $S(Z)$ is closed under addition, 
multiplication with a positive scalar and the maximum. 
\end{remark}

\begin{remark}\label{r:sasn}
Let us call a seminorm $p$ on a topologically ordered $*$-space a $*$-seminorm 
if $p(z^*)=p(z)$ for all $z\in Z$. 
It turns out that, the topology of a topologically 
ordered $*$-space can always be given by a family of continuous increasing $*$-seminorms. 
To see this, given a continuous increasing seminorm $q$, define $p(z):=\frac{1}{2}(q(z)+q(z^*))$ 
for any $z\in Z$. It is then easy to see that $p$ is a continuous $*$-seminorm. To 
show that it is increasing as well, let $0\leq z_1 \leq z_2$; in particular, 
$z_1=z_1^*$ and $z_2=z_2^*$. Then 
\begin{equation*}
p(z_1) = \frac{1}{2}(q(z_1)+q(z_1^*)) = q(z_1) \leq q(z_2) = \frac{1}{2}(q(z_1)+q(z_1^*)) = p(z_2) 
\end{equation*} 
and $p$ is increasing. 

Now given a collection of continuous increasing seminorms $\{q_j\}_{j\in \cJ}$ defining the topology 
of $Z$ as in $[(a5^\prime)]$, let $\{p_j\}_{j\in \cJ}$ be the corresponding collection 
of continuous increasing $*$-seminorms obtained by the above formula. We show that they 
give the same locally convex topology: Since the involution is continuous, 
for any $q_j$, there exists $\{q_{j_i}\}_{i=1}^{n}$ such that $q_j(z^*)\leq \sum_{i=1}^{n} q_{j_i}(z)$ 
for all $z$. Therefore $p_j(z)=\frac{1}{2}(q_j(z)+q_j(z^*)) \leq q_j(z) + \sum_{i=1}^{n} q_{j_i}(z)$. 
We also have $q_j(z) \leq 2 p_j(z)$ clearly and the equivalence of the topologies is shown. 
\end{remark} 

We denote the set of all continuous increasing $*$-seminorms by $S_*(Z)$. Similar to $S(Z)$, $S_*(Z)$ is 
closed under addition, multiplication with a positive scalar and the maximum. 

The following lemma is basically Corollary 2.24 in \cite{Aliprantis}. It shows that, closedness of the cone can always be assumed, by passing to the closure if necessary. 

\begin{lemma}\label{l:clcone}
Assume that an ordered $*$-space $Z$ satisfies all axioms (a3)--(a6) except for (a5), that is, the cone $Z^+$ is not necessarily closed with respect to the specified topology. Then by passing to the closure $\ol{Z^+}$ of the cone $Z^+$ we obtain a topologically ordered $*$-space. 
\end{lemma}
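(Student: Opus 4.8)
The plan is to verify the axioms (a1)--(a6) for the pair $(Z,\overline{Z^+})$ one at a time. Axioms (a1), (a3) and (a4) are untouched, since neither the complex-linear and involutive structure nor the locally convex topology of $Z$ changes when we only enlarge the cone, and (a5) holds for $\overline{Z^+}$ by construction. So the real content lies in (a2) and (a6).

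First I would dispose of the soft parts of (a2). That $\overline{Z^+}$ is again a convex cone is the standard fact that the closure of a convex cone in a topological vector space is a convex cone, a consequence of the joint continuity of addition and the continuity of scalar multiplication. That every element of $\overline{Z^+}$ is selfadjoint follows from (a4): the map $z\mapsto z-z^*$ is continuous, so $Z^h=\{z\in Z: z=z^*\}$ is closed (here (a3) is used), and since $Z^+\subseteq Z^h$ we get $\overline{Z^+}\subseteq Z^h$.

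The crux is strictness, $\overline{Z^+}\cap(-\overline{Z^+})=\{0\}$, and this is exactly where axiom (a6) enters, in its equivalent form (a6$'$). Fix a family $\{p_j\}_{j\in\cJ}$ of continuous increasing seminorms defining the topology of $Z$. Let $x\in\overline{Z^+}\cap(-\overline{Z^+})$, fix $j$ and $\varepsilon>0$, and choose $a,b\in Z^+$ with $p_j(a-x)<\varepsilon$ and $p_j(b+x)<\varepsilon$. Then $a+b\in Z^+$ with $0\leq a\leq a+b$, so monotonicity of $p_j$ gives $p_j(a)\leq p_j(a+b)$, while $p_j(a+b)\leq p_j(a-x)+p_j(b+x)<2\varepsilon$ since $a+b=(a-x)+(b+x)$. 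Hence $p_j(x)\leq p_j(x-a)+p_j(a)<3\varepsilon$, and letting $\varepsilon\to 0$ gives $p_j(x)=0$ for all $j$, so $x=0$ by the Hausdorff property. This settles (a2).

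Finally, for (a6) I would show that the very same seminorms $\{p_j\}$ remain increasing for the possibly larger order induced by $\overline{Z^+}$, which yields (a6$'$) and hence (a6). The argument is a variant of the one just used: if $x\in\overline{Z^+}$ and $y-x\in\overline{Z^+}$, approximate $x$ by $a\in Z^+$ and $y-x$ by $c\in Z^+$ within $p_j$-distance $\varepsilon$; then $0\leq a\leq a+c$ gives $p_j(a)\leq p_j(a+c)$, and combining $p_j(x)\leq p_j(a)+\varepsilon$ with $p_j(a+c)\leq p_j(y)+p_j\big((a-x)+(c-(y-x))\big)\leq p_j(y)+2\varepsilon$ produces $p_j(x)\leq p_j(y)+3\varepsilon$; now let $\varepsilon\to 0$. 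I expect these two monotonicity estimates --- preservation of strictness and of (a6$'$) under enlarging the cone --- to be the only non-formal points; everything else is bookkeeping, and the statement is in essence Corollary 2.24 of \cite{Aliprantis}.
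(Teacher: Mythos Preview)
Your proof is correct and follows essentially the same approach as the paper's: both verify the cone axioms for $\overline{Z^+}$, obtain strictness by approximating $x$ and $-x$ by positive elements and using the increasing property of the seminorms, and then show the same seminorms remain increasing for the enlarged order by an analogous approximation argument. The only cosmetic differences are that the paper phrases the approximations with nets rather than $\varepsilon$-estimates and handles selfadjointness via a net limit instead of your closedness-of-$Z^h$ argument.
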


\begin{proof}
By passing to the closure $\ol{Z^+}$, we consider the space $Z$ with positive elements $\ol{Z^+}$. First we show that $(Z,\ol{Z^+})$ is an ordered $*$-space: It is clear that $\ol{Z^+}$ is closed under addition and multiplication with positive scalars. 

In order to see that $\ol{Z^+}$ consists of selfadjoint elements, let $z\in\ol{Z^+}$. Then there exists a net $(z_i) \subset Z^+$, $i\in \cI$ such that $z_i\ra z$. By continuity of $*$, $z_i^{*}\ra z^{*}$. As $z_i^*=z_i$ for any $i\in\cI$, it follows that $z_i\ra z^{*}$ and since the topology is Hausdorff, $z=z^*$.

We now show that $\ol{Z^+}$ is strict. Assume that there is $z\in\ol{Z^+}$ such that $-z\in\ol{Z^+}$ as well. Then there exist nets $(z_i)\subset Z^+$, $i\in\cI$ and $(w_j)\subset Z^+$, $j\in\cJ$ such that $z_i\ra z$ and $w_j\ra -z$. Let $p\in S_*(Z)$ be arbitrary and let $\epsilon > 0$. Pick $i_0\in\cI$ such that $p(z-z_i)< \epsilon/2$ for $i\geq i_0$ and also pick $j_0\in\cJ$ such that $p(w_j+z) < \epsilon/2$ for $j \geq j_0$. Observe that, by the increasing property of $p$, 
\begin{equation*}
0\leq p(z_i) \leq p(z_i + w_j) = p(z_i - z + z - w_j) 
\leq p(z-z_i) + p(z+w_j) < \epsilon
\end{equation*}
from which it follows that $p(z_i)\ra 0$. But we also have $p(z_i) \ra p(z)$, so $p(z)=0$. By the Hausdorff property and since $p$ was arbitrary, $z=0$ and therefore $\ol{Z^+}$ is strict. This completes the proof that $(Z,\ol{Z^+})$ is an ordered $*$-space.

Finally we show that any $p\in S(Z)$ is increasing when $\ol{Z^+}$ is considered instead of $Z^+$ as well: Let $y,z\in \ol{Z^+}$ be arbitrary and pick nets $z_i\ra z$ with $(z_i)\subset Z^+$, $i\in\cI$ and $y_j\ra y$ with $(y_j)\subset Z^+$, $j\in\cJ$. By increasing property of $p$, we have $p(z_i) \leq p(z_i + y_j)$ for all $i\in\cI$ and $j\in\cJ$. By passing to limits, it follows that $p(z) \leq p(z+y)$ and the increasing property is shown. In particular, any $p\in S_*(Z)$ is an increasing $*$-seminorm and the proof is complete.
\end{proof}
\subsection{Vector Euclidean Spaces and Their Linear Operators.}
Given a complex linear space $\cE$ and an
ordered $*$-space $Z$, a \emph{$Z$-valued inner product} or 
\emph{$Z$-gramian}  is, by definition, a mapping  
$\cE\times \cE\ni (x,y) \mapsto [x,y]\in Z$ subject to 
the following properties:
\begin{itemize}
\item[(ve1)] $[x,x] \geq 0$ for all $x\in \cE$, and $[x,x]=0$ if and only if $x=0$.
\item[(ve2)] $[x,y]=[y,x]^*$ for all $x,y\in\cE$.
\item[(ve3)] $[x,ay_1+by_2]=a[x,y_1]+b[x,y_2]$ for all $a,b\in \mathbb{C}$ and all
  $x_1,x_2\in \cE$.
\end{itemize}

A complex linear space $\cE$ onto which a $Z$-valued inner product 
$[\cdot,\cdot]$ is specified, for a 
certain ordered $*$-space $Z$, is called a \emph{VE-space} 
(Vector Euclidean space) over $Z$.  

In any VE-space $\cE$ over an ordered $*$-space $Z$ the familiar 
\emph{polarisation formula}
\begin{equation}\label{e:polar} 4[x,y]=\sum_{k=0}^3 \iac^k [(x+\iac^k y,x+\iac^k y],
\quad x,y\in \cE,
\end{equation} holds, which shows that the $Z$-valued inner product is perfectly 
defined by the $Z$-valued quadratic map $\cE\ni x\mapsto [x,x]\in Z$.

The concept of \emph{VE-spaces isomorphism} is also naturally defined: this is just 
a linear bijection $U\colon \cE\ra \cF$, for two VE-spaces over the same ordered $*$-space 
$Z$, which is \emph{isometric}, that is, 
$[Ux,Uy]_\cF=[x,y]_\cE$ for all $x,y\in \cE$. 

%

Given two VE-spaces $\cE$ and $\cF$, over the same ordered $*$-space $Z$, 
one can consider the vector space $\cL(\cE,\cF)$ of all
linear operators $T\colon \cE\ra\cF$. The operator $T$ is 
called \emph{order bounded} if there exists $C\geq 0$ such that
\begin{equation}\label{e:bounded} [Te,Te]_\cF\leq C^2 [e,e]_\cE,\quad e\in\cE.
\end{equation} Note that the inequality \eqref{e:bounded} is in the sense of the order 
of $Z$ uniquely determined by the cone $Z_+$, 
see the axiom (a2). 
The infimum of these scalars is denoted by $\|T\|$ and it is called 
the \emph{order operator norm} or \emph{order norm} of $T$, more precisely,
\begin{equation}\label{e:opnorm}\|T\|=\inf\{C>0\mid [Te,Te]_\cF\leq C^2 [e,e]_\cE,\mbox{ for all }e\in\cE\}.
\end{equation} Let $\cB(\cE,\cF)$ denote the collection of all order bounded linear operators $T\colon \cE\ra\cF$. Then $\cB(\cE,\cF)$ is a linear space 
and $\|\cdot\|$ is a norm on it, cf.\ Theorem 1 in \cite{Loynes2}. In addition, if $T$ 
and $S$ are order bounded linear operators acting between appropriate VE-spaces over 
the same ordered $*$-space $Z$, then $\|TS\|\leq \|T\| \|S\|$, in particular $TS$ is 
bounded. If $\cE=\cF$ then $\cB(\cE)=\cB(\cE,\cE)$ is a normed algebra, more 
precisely, the operator norm is submultiplicative. 

A linear operator $T\in\cL(\cE,\cF)$ is 
called \emph{adjointable} if there exists $T^*\in\cL(\cF,\cE)$ such that
\begin{equation}\label{e:adj} [Te,f]_\cF=[e,T^*f]_\cE,\quad e\in\cE,\ f\in\cF.
\end{equation} The operator $T^*$, if it exists, is uniquely determined by 
$T$ and called its \emph{adjoint}.
Since an analog of the Riesz Representation Theorem for VE-spaces
does not exist, in general, 
there may be not so many adjointable operators. We denote by 
$\cL^*(\cE,\cF)$ the vector space of all adjointable operators from
$\cL(\cE,\cF)$. 
Note that $\cL^*(\cE)=
\cL^*(\cE,\cE)$ is a $*$-algebra with respect to the involution $*$ determined 
by the operation of taking the adjoint.

An operator $A\in\cL(\cE)$ is called \emph{selfadjoint} if
\begin{equation}\label{e:self} [Ae,f]=[e,Af],\quad e,f\in \cE.
\end{equation} Clearly, any selfadjoint operator $A$ is adjointable and $A=A^*$.
By the polarisation formula \eqref{e:polar}, $A$ is selfadjoint if and only if
\begin{equation}\label{e:self2} [Ae,e]=[e,Ae],\quad e\in\cE.
\end{equation}

An operator $A\in\cL(\cE)$ is \emph{positive} if
\begin{equation}\label{e:pos} [Ae,e]\geq 0,\quad e\in\cE.\end{equation}
Since the cone $Z_+$ consists of selfadjoint elements only, 
any positive operator is selfadjoint and hence adjointable.

Let $\cB^*(\cE)$ denote the collection of all adjointable order bounded linear operators 
$T\colon \cE\ra\cE$. Then $\cB^*(\cE)$ is a pre-$C^*$-algebra, that is, it is a 
normed $*$-algebra with the property
\begin{equation}\label{e:prec} \|A^*A\|=\|A\|^2,\quad A\in\cB^*(\cE),
\end{equation} cf.\ Theorem 4 in \cite{Loynes2}.
 In particular, the involution $*$ is isometric on $\cB^*(\cE)$, that is, $\|A^*\|=\|A\|$ 
 for all $A\in\cB^*(\cE)$.
 
 If $A\in\cB^*(\cE)$ can be factored $A=T^*T$, for some $T\in\cB^*(\cE)$, 
 then $A$ is  positive. If, in addition, $\cB^*(\cE)$ is complete, and hence 
 a $C^*$-algebra, and $A\in\cB^*(\cE)$ 
is positive, then $A=T^*T$ for some $T\in\cB^*(\cE)$. This is the case when $\cE$ is a VH-space, see subsection \ref{ss:vhs}, by Lemma 2 in \cite{Loynes2}. 

\subsection{Vector Hilbert Spaces and Their Linear Operators.}\label{ss:vhs}
If $Z$ is a topologically ordered $*$-space, 
any VE-space $\cE$ can be made in a natural way
into a Hausdorff separated locally convex space by considering the weakest locally 
convex topology on $\cE$ that makes the mapping 
$E\ni h\mapsto [h,h]\in Z$ continuous, more 
precisely, letting $\{N_j\}_{j\in\cJ}$ be the collection of convex and solid 
neighbourhoods of the origin in $Z$ as in axiom (a5), the collection of sets
\begin{equation}\label{e:ujex}U_j=\{x\in \cE\mid [x,x]\in N_j\},\quad j\in\cJ,
\end{equation} is a topological base of neighbourhoods of the origin of $\cE$ that 
linearly generates the weakest locally convex topology on $\cE$ that 
makes all mappings $\cE\ni h\mapsto [h,h] \in Z$ continuous, cf.\ 
Theorem 1 in \cite{Loynes1}. In terms of seminorms, this topology can 
be defined in the following way: let $\{p_j\}_{j\in \cJ}$ be a family of increasing 
seminorms defining the topology of $Z$ as in axiom (a5$^\prime$) and let
\begin{equation}\label{e:qujeh} \tl{p_j}(h)=p_j([h,h])^{1/2},\quad h\in \cE,\ j\in\cJ.
\end{equation}
Then each $\tl{p_j}$ is a seminorm on $\cE$ and its topology 
is fully determined by the family $\{\tl{p_j}\}_{j\in\cJ}$, see Lemma~1.3 in \cite{AyGheondea2}. 
With respect to this topology, we call $\cE$ a 
\emph{topological VE-space} over $Z$. In the special case when $Z$ is a normed ordered $*$-space, we call $\cE$ a \emph{normed VE-space} over $Z$. 

The following is a Schwarz type inequality that we will use in the rest of the article. For a proof, see Lemma 2.2 of \cite{AyGheondea3}. 

\begin{lemma}\label{l:schwarz}
Let $\cE$ be a topological VE-space over the topologically ordered $*$-space 
$Z$ and $p\in S(Z)$. Then
\begin{equation}\label{e:schwarz}
p([e,f])\leq 4p([e,e])^{1/2}p([f,f])^{1/2}=4\tl{p}(e)\tl{p}(f), \quad e,f\in\cE.
\end{equation}
\end{lemma}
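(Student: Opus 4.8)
The plan is to mimic the classical derivation of the Cauchy--Schwarz inequality, but to carry it out at the level of the increasing seminorm $p$ (which is \emph{not} assumed to be a $*$-seminorm) and to control the $Z$-valued cross term by a genuinely two-sided order estimate rather than a one-sided positivity argument.

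First I would fix $e,f\in\cE$ and $\lambda\in\CC$ and expand the positivity axiom (ve1), using (ve2)--(ve3):
\begin{equation*}
0\le [e+\lambda f,\,e+\lambda f]=a_\lambda+w_\lambda,\qquad a_\lambda:=[e,e]+|\lambda|^2[f,f],\quad w_\lambda:=\lambda[e,f]+\ol\lambda[e,f]^*.
\end{equation*}
Replacing $\lambda$ by $-\lambda$ turns $w_\lambda$ into $-w_\lambda$ while fixing $a_\lambda$, so $-a_\lambda\le w_\lambda\le a_\lambda$; since $a_\lambda\ge 0$ this gives $0\le a_\lambda\pm w_\lambda\le 2a_\lambda$. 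As $p$ is increasing, $p(a_\lambda\pm w_\lambda)\le p(2a_\lambda)=2p(a_\lambda)$, and then from $2w_\lambda=(a_\lambda+w_\lambda)-(a_\lambda-w_\lambda)$ and subadditivity of $p$,
\begin{equation*}
2p(w_\lambda)\le p(a_\lambda+w_\lambda)+p(a_\lambda-w_\lambda)\le 4p(a_\lambda)\le 4p([e,e])+4|\lambda|^2p([f,f]).
\end{equation*}

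Next I would specialize $\lambda$ to reach the two ``halves'' of $[e,f]$. Set $u:=\tfrac12([e,f]+[e,f]^*)$ and $v:=\tfrac12([e,f]-[e,f]^*)$, so $[e,f]=u+v$. For $t>0$, the choice $\lambda=t$ gives $w_t=2tu$ and the choice $\lambda=\iac t$ gives $w_{\iac t}=2\iac t v$; inserting these into the displayed bound and using $p(\iac\,\cdot)=p(\cdot)$ yields
\begin{equation*}
p(u)\le \tfrac1t p([e,e])+t\,p([f,f]),\qquad p(v)\le \tfrac1t p([e,e])+t\,p([f,f]),\qquad t>0.
\end{equation*}
Hence $p([e,f])\le p(u)+p(v)\le \tfrac2t p([e,e])+2t\,p([f,f])$ for all $t>0$. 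If $p([e,e])$ and $p([f,f])$ are both positive, taking $t=(p([e,e])/p([f,f]))^{1/2}$ gives $p([e,f])\le 4\,p([e,e])^{1/2}p([f,f])^{1/2}$; if one of them is zero, letting $t\to0^+$ or $t\to+\infty$ forces $p([e,f])=0$, so the inequality is trivial. Since $p([e,e])^{1/2}=\tl{p}(e)$ and $p([f,f])^{1/2}=\tl{p}(f)$ by \eqref{e:qujeh}, this is the claim.

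I do not foresee a real obstacle. The one point that needs care is that $p$ is merely increasing and not $*$-invariant, so neither $p(u)$ nor $p(v)$ can be bounded directly by $p([e,f])$; this is precisely why one must use the full two-sided estimate $-a_\lambda\le w_\lambda\le a_\lambda$ together with the elementary remark that $-a\le w\le a$ with $a\ge 0$ forces $p(w)\le 2p(a)$, rather than the one-sided argument that suffices in the scalar case. (The slack in that elementary remark, and the split of $[e,f]$ into $u+v$, are what produce the constant $4$.)
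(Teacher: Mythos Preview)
Your argument is correct. The two-sided order estimate $-a_\lambda\le w_\lambda\le a_\lambda$ is valid because $w_\lambda$ is selfadjoint, and your use of the increasing property of $p$ is applied only to chains $0\le a_\lambda\pm w_\lambda\le 2a_\lambda$ of nonnegative elements, as required by axiom (a6$'$). The optimisation in $t$ and the treatment of the degenerate cases are routine.

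As for comparison: the present paper does not give its own proof of this lemma but refers to Lemma~2.2 of \cite{AyGheondea3}, so there is no in-paper argument to set yours against. Your derivation is in any case the natural one for this setting: the factor $4$ arises exactly from the two places you identify --- the loss of a factor $2$ when passing from the two-sided order bound to a seminorm bound on $w_\lambda$, and another factor $2$ from the decomposition $[e,f]=u+v$ needed because $p$ is not assumed to be a $*$-seminorm. This matches the constant in the statement, which strongly suggests the cited proof proceeds along the same lines.
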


It turns out that if positive operators are considered, a stronger Schwarz type inequality holds. For 
a proof of the following lemma, see Lemma 2.13 of \cite{AyGheondea2}. 

\begin{lemma}\label{l:schwarzforposop}
Let $T \in \cL^*(\cE)$ be a positive operator on a topological VE-space $\cE$ 
over the topologically ordered $*$-space $Z$. Let $p\in S(Z)$. Then 
\begin{equation*}
p([Th,h] )\leq p([Th,Th] )^{\frac{1}{2}}
p([h,h] )^{\frac{1}{2}},\quad
h\in\cH.
\end{equation*} 
\end{lemma}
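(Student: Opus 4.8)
The plan is to deduce the stronger Schwarz inequality for a positive operator $T\in\cL^*(\cE)$ from the general Schwarz inequality of Lemma~\ref{l:schwarz} together with the factorisation properties of positive elements. The key observation is that, for a positive $A\in\cL^*(\cE)$, the sesquilinear map $\cE\times\cE\ni(e,f)\mapsto [Ae,f]\in Z$ is itself a (degenerate) $Z$-valued gramian: it is linear in the second variable by (ve3), and Hermitian-symmetric since $[Ae,f]=[e,Af]=[Af,e]^{*}$ using selfadjointness of $A$ (which holds because positivity forces selfadjointness, as noted after \eqref{e:pos}), and $[Ae,e]\geq 0$ for all $e$ by \eqref{e:pos}. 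So one would like to run the usual Cauchy--Schwarz argument for this form. The subtlety is that without a factorisation $A=S^{*}S$ one cannot immediately pass to $[Ae,Ae]$; the inequality we want has $p([Th,Th])^{1/2}$ on the right, not $p([T^{2}h,h])^{1/2}$, so I expect the proof to combine the gramian-Schwarz estimate with a clever choice of vectors.

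Concretely, I would first reduce to the case $p\in S_*(Z)$ (a $*$-seminorm), which is harmless by Remark~\ref{r:sasn} since replacing $p$ by $z\mapsto\tfrac12(p(z)+p(z^{*}))$ only enlarges it by a continuous increasing $*$-seminorm and does not change which inequalities are provable up to equivalence; alternatively just note $[Th,h]$ and $[Th,Th]$ behave symmetrically enough. Then, applying Lemma~\ref{l:schwarz} directly to the vectors $Th$ and $h$ gives $p([Th,h])\leq 4\,p([Th,Th])^{1/2}p([h,h])^{1/2}$, which is the desired inequality except for the constant $4$. The real content of the lemma is therefore removing that factor, and the standard device is a \emph{tensoring/scaling trick}: for any $\lambda>0$ one still has $p([T(\lambda h),\lambda h])$-type identities, but scaling alone will not kill the constant. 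Instead I would exploit the positivity of the form $(e,f)\mapsto[Ae,f]$ more genuinely: for real $t$, expand $0\leq [A(tu+v),tu+v]=t^{2}[Au,u]+t([Au,v]+[Av,u])+[Av,v]$ in $Z$, apply an increasing $*$-seminorm $p$, use that $[Au,v]+[Av,u]$ is selfadjoint, and optimise over $t$ to obtain $p([Au,v]+[Av,u])\leq 2\,p([Au,u])^{1/2}p([Av,v])^{1/2}$; a further polarisation-type argument with $v$ replaced by $\iac v$ handles the imaginary part and yields $p([Au,v])\leq 2\,p([Au,u])^{1/2}p([Av,v])^{1/2}$ or better. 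Taking $u=h$ and $v=Th$ then gives $p([Th,Th])=p([A h, Th])\le \cdots$, and rearranging produces the sharp bound $p([Th,h])\leq p([Th,Th])^{1/2}p([h,h])^{1/2}$.

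The step I expect to be the main obstacle is precisely this constant-chasing: in Lemma~\ref{l:schwarz} the factor $4$ is genuinely present (it comes from the polarisation formula \eqref{e:polar}), so any route that merely invokes Lemma~\ref{l:schwarz} as a black box will not give the clean constant $1$ claimed here. One must instead work with the quadratic form $t\mapsto p([A(h+tTh),h+tTh])$ — using that $p$ is an \emph{increasing} seminorm on $Z$ so that the $Z$-valued inequality $0\le[A(\cdot),(\cdot)]$ passes through $p$ — and minimise a genuine scalar quadratic $at^{2}+bt+c$ with $a=p([Th,Th])^{?}$ handled via sub/triangle inequalities. Care is needed because $p$ is only subadditive, not additive, so expanding $[A(h+tTh),h+tTh]$ and then applying $p$ loses control of cross terms unless one is careful to keep the selfadjoint combination $[ATh,h]+[Ah,Th]=2\operatorname{Re}[Th,h]$-analogue together; isolating exactly the term $[Th,h]$ (rather than its real part) is where a secondary rotation argument enters. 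Once the scalar minimisation is set up correctly, the conclusion follows by the standard discriminant inequality, and the passage back from $S_*(Z)$ to general $p\in S(Z)$ is immediate since $S(Z)$ and $S_*(Z)$ generate the same topology and the stated inequality is homogeneous and monotone in $p$.
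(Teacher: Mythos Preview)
Your route through the positive form $(e,f)\mapsto[Te,f]$ does not reach the target. With $A=T$, $u=h$, $v=Th$ your Schwarz-for-$[A\cdot,\cdot]$ step yields
\[
p([Th,Th])=p([Ah,Th])\leq C\,p([Th,h])^{1/2}\,p([T^{2}h,Th])^{1/2},
\]
which bounds the wrong quantity and drags in $T^{2}$; no ``rearranging'' recovers $p([h,h])$ on the right-hand side from this. The same defect persists in your last paragraph, where you again expand $[A(h+tTh),h+tTh]$ rather than the ordinary gramian. The rotation you anticipate is moreover unnecessary: since $T$ is positive, $[Th,h]\in Z_{+}$ is already selfadjoint, so $[h,Th]=[Th,h]$ holds outright.

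The paper itself does not give a proof here (it cites Lemma~2.13 of \cite{AyGheondea2}), but the standard argument is the quadratic trick applied to the \emph{original} gramian, not to $[T\cdot,\cdot]$. For $t>0$ one has in $Z$
\[
0\leq [h-tTh,\,h-tTh]=[h,h]-2t\,[Th,h]+t^{2}[Th,Th],
\]
the cross terms combining cleanly precisely because $[Th,h]=[h,Th]$. Hence $2t\,[Th,h]\leq [h,h]+t^{2}[Th,Th]$ in $Z$, and since $p$ is increasing and subadditive,
\[
2t\,p([Th,h])\leq p([h,h])+t^{2}\,p([Th,Th]),\qquad t>0.
\]
Optimising over $t>0$ (and handling the degenerate cases $p([h,h])=0$ or $p([Th,Th])=0$ by letting $t\to 0^{+}$, respectively $t\to\infty$) gives the stated inequality with constant $1$. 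No polarisation and no reduction to $S_{*}(Z)$ are needed.
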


If both of $Z$ and $\cE$ are complete with respect to their specified
locally convex topologies, then $\cE$ is called a \emph{VH-space} 
(Vector Hilbert space), sometimes also called a \emph{pseudo Hilbert space}. In the case when $Z$ is normed, then $\cE$ is called a \emph{normed VH-space}. Any topological 
VE-space $\cE$ on a topologically ordered $*$-space 
can be embedded as a dense subspace of a VH-space $\cH$, 
uniquely determined up to an isomorphism, cf.\ Theorem 2 in
\cite{Loynes1}. Note that, given two VH-spaces $\cH$ and $\cK$, over the same 
admissible space $Z$, any isomorphism $U\colon \cH\ra\cK$ 
in the sense of VE-spaces, is automatically bounded and adjointable, hence 
$U\in\cB(\cH,\cK)$, and it is natural to call this operator \emph{unitary}.

Now we provide some examples of admissible spaces, topological VE-spaces and VH-spaces. For a more comprehensive list of examples, we refer to \cite{AyGheondea2}. 
\begin{examples}\label{e:evhs}

(1) Any $C^*$-algebra $\cA$ is an admissible space, as well 
as any closed $*$-subspace $\cS$ of a $C^*$-algebra $\cA$, with the 
positive cone $\cS^+=\cA^+\cap\cS$ and all other operations (addition, 
multiplication with scalars, and involution) inherited from $\cA$.

(2) Any pre-$C^*$-algebra is a topologically ordered $*$-space. Any 
$*$-subspace $\cS$ of a pre-$C^*$-algebra $\cA$ is a topologically ordered 
$*$-space, with the positive cone $\cS^+=\cA^+\cap\cS$ and all other 
operations inherited from $\cA$. Note that, by Lemma \ref{l:clcone} we can pass to the closure of $\cS^+$ if needed. 

(3) Any locally $C^*$-algebra, cf.\ \cite{Inoue}, \cite{Phillips}, 
see subsection \ref{ss:lcalhm}
 is an admissible 
space. 
In particular, any closed $*$-subspace $\cS$ of a locally $C^*$-algebra $\cA$, 
with the cone $\cS_+=\cA^+\cap\cS$ and all other operations inherited from 
$\cA$, is an admissible space. 

(4) Any locally pre-$C^*$-algebra, see subsection \ref{ss:lcalhm}, is a topologically ordered $*$-space. Any
$*$-subspace $\cS$ of a locally pre-$C^*$-algebra is a topologically ordered
$*$-space, with $\cS^+=\cA^+\cap\cS$ and all other operations inherited from 
$\cA$. Again, by Lemma \ref{l:clcone} we can pass to the closure of $\cS^+$ if needed.

(5) Any Hilbert $C^*$-module $\cH$ over a $C^*$-algebra $\cA$, see e.g. \cite{Lance} and \cite{ManuilovTroitsky}, as well as any closed linear subspace $\cS$ of a Hilbert $C^*$-module $\cH$ over the same $C^*$-algebra $A$ is an example of a normed VH-space. A pre-Hilbert $C^*$-module over a pre-$C^*$-algebra, as well as any linear subspace of a pre-Hilbert $C^*$-module, is an example of a normed  VE-space. 

(6) More generally, any locally Hilbert $C^*$-module $\cH$ over a locally $C^*$-algebra $\cA$, see subsection \ref{ss:lcalhm}, as well as any closed linear subspace $\cS$ of a locally Hilbert $C^*$-module over the same locally $C^*$-algebra $\cA$ is an example of a VH-space. A pre-locally Hilbert $C^*$-module over a pre-locally $C^*$-algebra, as well as any linear subspace of a pre-locally Hilbert $C^*$-module, is an example of a topological VE-space. 

(7) Let $\cH$ be an infinite dimensional separable Hilbert space and let
$\cC_1$ be 
the trace-class ideal, that is, the collection of all linear bounded operators
$A$ 
on $\cH$ such that $\tr(|A|)<\infty$. $\cC_1$ is a $*$-ideal of $\cB(\cH)$ and
complete under the 
norm $\|A\|_1=\tr(|A|)$. Positive elements in $\cC_1$ are defined in the sense 
of positivity in $\cB(\cH)$. In addition, the norm $\|\cdot\|_1$ is
increasing, since 
$0\leq A\leq B$ implies $\tr(A)\leq \tr(B)$, hence $\cC_1$ is a normed
admissible space.

(8) With notation as in (7), consider $\cC_2$ the ideal of Hilbert-Schmidt
operators on $\cH$. Then $[A,B]=A^*B$, for all $A,B\in\cC_2$, is a gramian with values in the 
admissible space $\cC_1$ with respect to which $\cC_2$ becomes a VH-space. Since $\cC_1$ is a normed admissible space, $\cC_2$ is a normed VH-space, with norm 
$\|A\|_2=\tr(|A|^2)^{1/2}$, for all $A\in\cC_2$, cf. \eqref{e:qujeh}. More abstract versions of this example have been
considered by Saworotnow in \cite{Saworotnow}.

(9) Let $\cH$ be a Hilbert space and $\cE$ a
VH-space over the admissible space $Z$. On the algebraic tensor product
$\cH\otimes \cE$ define a gramian by
\begin{equation*} [h\otimes e,l\otimes f]_{\cH\otimes\cE}
=\langle h,l\rangle_\cH [e,f]_\cE\in Z,\quad
  h,l\in \cH,\ e,f\in \cE,
\end{equation*} and then extend it to $\cH\otimes \cE$ by linearity. It can
be proven that, in this way, $\cH\otimes \cE$ is a VE-space over $Z$. Since
$Z$ is an admissible space, $\cH\otimes\cE$ can be topologised as in \eqref{e:qujeh} and then completed to a VH-space
$\cH\widetilde\otimes\cE$ over $Z$.
\end{examples}

\subsection{Locally $C^*$-Algebras and Locally Hilbert 
$C^*$-Modules.}\label{ss:lcalhm}
In this subsection we recall the definitions of locally $C^*$-algebras 
and locally Hilbert $C^*$-modules, and review some known facts.  

A $*$-algebra $\cA$ that has a complete 
Hausdorff topology induced by a family of
\emph{$C^*$-seminorms}, that is, seminorms $p$ on $\cA$ that satisfy the
\emph{$C^*$-condition} $p(a^*a) = p(a)^2$ for all $a\in\cA$, is called a 
\emph{locally $C^*$-algebra} \cite{Inoue} 
(equivalent names are \emph{(Locally Multiplicatively Convex) $LMC^*$-algebras}
\cite{Schmudgen}, \cite{Mallios}, or \emph{$b^*$-algebra}
\cite{Allan}, \cite{Apostol}, 
or \emph{pro $C^*$-algebra} \cite{Voiculescu}), \cite{Phillips}. Note that, any
$C^*$-seminorm is \emph{submultiplicative},  
$p(ab)\leq p(a)p(b)$ for all $a,b\in\cA$, cf.\ \cite{Sebestyen},
and is a $*$-seminorm, $p(a^*)=p(a)$ for all $a\in\cA$. 
Denote the collection of all continuous $C^*$-seminorms 
by $S^*(\cA)$. Then $S^*(\cA)$ is a 
directed set under pointwise maximum seminorm, namely, 
given $p,q\in S^*(\cA)$, letting 
$r(a):=\mbox{max}\{p(a),q(a)\}$ for all $a\in\cA$, then 
$r$ is a continuous $C^*$-seminorm and $p,q\leq r$. 
Locally $C^*$-algebras were studied in \cite{Allan}, \cite{Apostol},
\cite{Inoue}, \cite{Schmudgen}, \cite{Phillips}, and \cite{Zhuraev}, to cite a
few.  

It follows from Corollary 2.8 in \cite{Inoue} that any locally $C^*$-algebra 
is, in particular, an admissible space, more precisely, 
a directed family of increasing 
seminorms generating the topology in axiom (a5$^\prime$) 
in Subsection \ref{ss:as} is $S^*(\cA)$.
Note that $S^*(\cA) \subset S_*(\cA)$ and, although 
they generate the same topology on $\cA$, 
these two sets are quite different. For instance, 
while $S_*(\cA)$ is a cone, $S^*(\cA)$ is not even stable 
under positive scalar multiplication. 
 
A \emph{pre-Hilbert module over a locally $C^*$-algebra $\cA$}, 
or a \emph{pre-Hilbert $\cA$-module} is a topological VE-space $\cH$ 
over $\cA$ where, in addition we have a right module action of $\cA$ on $\cH$ that respects the gramian, namely, 
for every $a\in\cA$ and $h,g\in\cH$ we have $[g,ha]=[g,h]a$
. Note that the topology on the pre-Hilbert $\cA$-module $\cH$ is 
given by the family of seminorms $\{\tilde p\}_{p\in S^*(\cA)}$, where
$\tilde p(h)=p([h,h])^{1/2}$ for all $p\in S^*(\cA)$ and all $h\in\cH$. 
A pre-Hilbert $\cA$-module $\cH$ is called a 
\emph{Hilbert $\cA$-module} if it is complete, e.g.\ see \cite{Phillips} as well as \cite{Gheondea2}.

Let $\cH$ be a pre-Hilbert $\cA$-module, let $p \in S^*(\cA)$ 
and let $x,y \in \cH$. Then a Schwarz type inequality holds, 
e.g.\ see \cite{Zhuraev}, as follows
\begin{equation}\label{e:schwarz2}
p([ h,k]_{\cH})
\leq p([h,h]_{\cH})^{1/2}\, p([k,k]_{\cH})^{1/2},\quad h,k\in\cH.
\end{equation}
It is interesting to compare this inequality to the Schwarz type inequality of Lemma \ref{l:schwarz}, which is weaker but works for the larger class of seminorms $S(\cA)$.

\section{Classes of Continuous Linear Operators on Topological VE-Spaces}\label{s:cclo}

In this section, we study the properties of some classes of linear continuous operators 
acting between topological VE-spaces over the same topologically ordered $*$-space. 

For topological VE-spaces $\cE$ and $\cF$ over the same 
topologically ordered $*$-space $Z$, we denote 
the space of all linear continuous operators $T\colon\cE\ra\cF$ by $\cL_{\mathrm{c}}(\cE,\cF)$, 
and in particular, $\cL_{\mathrm{c}}(\cE,\cE)$ by $\cL_{\mathrm{c}}(\cE)$. By Remarks \ref{r:add} and \ref{r:sasn}, 
a characterization of elements of $\cL_{c}(\cE,\cF)$ is as follows: Fix an arbitrary family of generating seminorms $S_0(Z)\subseteq S_*(Z)$. Then a linear operator $T\colon \cE\ra \cF$ is in $\cL_{c}(\cE,\cF)$ if given any $p\in S_0(Z)$, 
there exists $q\in S_*(Z)$ with $\tl{p}(Tx) \leq \tl{q}(x)$ for all $x\in \cH$. It is easy to see that the choice of $S_0(Z)$ is immaterial; in particular, we can always choose $S_0(Z)$ to be $S_*(Z)$. The 
space of all continuous and continuously adjointable linear operators 
$T\colon\cE\ra\cF$ are denoted by $\cL^*_{\mathrm{c}}(\cE,\cF)$, and 
$\cL^*_{\mathrm{c}}(\cE)=\cL^*_{\mathrm{c}}(\cE,\cE)$, where the latter is a $*$-algebra, and in fact even an ordered $*$-algebra, see \cite{AyGheondea2} for the definition and a more detailed account.

For $\cE$ and $\cF$ as above, we denote by $\cL_{b}(\cE,\cF,S_0(Z))$ the set 
of linear and \emph{$S_0(Z)$-bounded} operators; this is 
the subset of $\cL_{c}(\cE,\cF)$ consisting of operators for which the seminorm $\tl{q}$ can be chosen 
to be $C \tl{p}$, where $S_0(Z)\subseteq S_*(Z)$ is a fixed generating family of seminorms, $C \geq 0$ is a constant dependent only on $T$ and $p$; i.e. we have, given $T\in \cL_{b}(\cE,\cF,S_0(Z))$ that, given any $p\in S_0(Z)$, there exists a constant $C\geq 0$ such that $\tl{p}(Tx)\leq C \tl{p}(x)$ for all $x\in\cE$. Let $\ol{p}(T)$ be the infimum of all such $C \geq 0$. Then 
it is routine to see that $\ol{p}$ is a seminorm on $\cL_{b}(\cE,\cF,S_0(Z))$ and that $\cL_{b}(\cE,\cF,S_0(Z))$ with the family of seminorms $\{\ol{p} \mid p\in S_0(Z) \}$ is turned into a Hausdorff locally convex space. In particular, we denote by $\cL_b(\cE,\cF)$ the space $\cL_b(\cE,\cF,S_*(Z))$, this is the space of all \emph{bounded} linear operators. 

Finally, we define $\cL_{b}^{*}(\cE,\cF,S_0(Z)):=\cL_{b}(\cE,\cF,S_0(Z)) \cap \cL^*(\cE,\cF)$, the space of all \emph{$S_0(Z)$-bounded and adjointable} operators. 
We use $\cL_{b}^{*}(\cE, S_0(Z))$ for $\cL_{b}^{*}(\cE,\cE, S_0(Z))$. In particular, we denote by $\cL_{b}^{*}(\cE,\cF):=\cL_{b}(\cE,\cF) \cap \cL^*(\cE,\cF)$, and call it  the space of all \emph{bounded and adjointable} operators. 

From now on, throughout the article, by $S_0(Z)$ we denote any fixed generating family of seminorms of a given topologically ordered $*$-space $Z$ with $S_0(Z)\subseteq S_*(Z)$.

\begin{lemma}\label{l:autobd}
Let $\cE$ and $\cF$ be VE-spaces over the topologically ordered $*$-space $Z$. Let $T\in \cL_{b}^{*}(\cE,\cF,S_0(Z))$. Then $T^*\in \cL_{b}^{*}(\cF,\cE,S_0(Z))$, and for any $p\in S_0(Z)$ we have $\ol{p}(T)=\ol{p}(T^*)$. 
\end{lemma}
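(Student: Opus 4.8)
The plan is to reduce everything to the sharp Schwarz inequality for positive operators, Lemma~\ref{l:schwarzforposop}, applied to the operator $TT^*$. First I would record the elementary facts about $T^*$ that need no boundedness: taking adjoints in the defining identity $[Te,f]_\cF=[e,T^*f]_\cE$ and using (ve2) gives $[T^*f,e]_\cE=[f,Te]_\cF$ for all $e\in\cE$ and $f\in\cF$, so $T^*$ is itself adjointable and $(T^*)^*=T$. Consequently $TT^*\colon\cF\ra\cF$ is adjointable with $(TT^*)^*=(T^*)^*T^*=TT^*$, hence selfadjoint, and it is positive on $\cF$ since $[TT^*f,f]_\cF=[T^*f,T^*f]_\cE\ge 0$ by (ve1). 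Thus $TT^*$ meets the hypotheses of Lemma~\ref{l:schwarzforposop}.

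Next, fix $p\in S_0(Z)$; since $S_0(Z)\subseteq S_*(Z)\subseteq S(Z)$, Lemma~\ref{l:schwarzforposop} applied to $TT^*$ gives, for every $f\in\cF$,
\begin{equation*}
\tl p(T^*f)^2=p([T^*f,T^*f]_\cE)=p([TT^*f,f]_\cF)\le p([TT^*f,TT^*f]_\cF)^{1/2}\,p([f,f]_\cF)^{1/2}=\tl p(TT^*f)\,\tl p(f).
\end{equation*}
I then invoke the $S_0(Z)$-boundedness of $T$, namely $\tl p(TT^*f)=\tl p(T(T^*f))\le\ol p(T)\,\tl p(T^*f)$; substituting and cancelling $\tl p(T^*f)$ (the case $\tl p(T^*f)=0$ being trivial) yields $\tl p(T^*f)\le\ol p(T)\,\tl p(f)$ for all $f\in\cF$. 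As $p\in S_0(Z)$ was arbitrary, $T^*$ is $S_0(Z)$-bounded, in particular continuous, so $T^*\in\cL^*_b(\cF,\cE,S_0(Z))$ and $\ol p(T^*)\le\ol p(T)$ for each $p\in S_0(Z)$.

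Finally, the reverse inequality is obtained by symmetry: applying what was just proved to $T^*$ in place of $T$ — legitimate now that $T^*\in\cL^*_b(\cF,\cE,S_0(Z))$ and $(T^*)^*=T$ — gives $\ol p(T)=\ol p((T^*)^*)\le\ol p(T^*)$, which together with the earlier bound yields $\ol p(T)=\ol p(T^*)$ for every $p\in S_0(Z)$, as claimed.

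The only real point to watch is that one must use the sharp Lemma~\ref{l:schwarzforposop} rather than the general Schwarz inequality of Lemma~\ref{l:schwarz}: the latter would contribute a factor $4$ and give merely $\ol p(T^*)\le 4\,\ol p(T)$, destroying the equality of seminorms. Everything else is bookkeeping — verifying that $TT^*$ is genuinely adjointable and positive so that Lemma~\ref{l:schwarzforposop} applies, and that $S_0(Z)$-boundedness for every $p\in S_0(Z)$ carries continuity with it, i.e.\ membership in $\cL_c(\cF,\cE)$.
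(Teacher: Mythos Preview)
Your proof is correct and is essentially the same as the paper's: both compute $\tl p(T^*f)^2=p([TT^*f,f])$, apply Lemma~\ref{l:schwarzforposop} to the positive operator $TT^*$ to bound this by $\tl p(TT^*f)\,\tl p(f)$, then use the $S_0(Z)$-boundedness of $T$ and cancel. You have written out in full the ``standard argument'' (positivity of $TT^*$, the cancellation, and the symmetry for the reverse inequality) that the paper leaves implicit.
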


\begin{proof}
Let $T\in \cL_{b}^{*}(\cE,\cF)$, $p\in S_0(Z)$ and $y\in \cF$. We have 
\begin{align*}
&p([T^*y,T^*y])=p([TT^*y,y]) \leq \tl{p}(TT^*y)\tl{p}(y) \\
&\leq \ol{p}(T)\tl{p}(T^*y)\tl{p}(y)
\end{align*} 
where for the first inequality Lemma \ref{l:schwarzforposop} is used. A standart argument now implies that $\ol{p}(T)=\ol{p}(T^*)$. 
\end{proof}

It follows from the preceeding lemma that, $\cL_{b}^{*}(\cE,\cF,S_0(Z))$ is a Hausdorff locally convex $*$-space; i.e. it is a $*$-space whose locally convex topology is given by $*$-seminorms and is Hausdorff. 

\begin{proposition}\label{p:bddcomp}
Let $\cF$ be a VH-space and $\cE$ be a topological VE-space over the admissible space $Z$. Then $\cL_{b}(\cE,\cF,S_0(Z))$ is a complete Hausdorff locally convex space. 
\end{proposition}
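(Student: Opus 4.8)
The plan is the standard completeness argument for spaces of operators, with care taken that the limit operator remains bounded in the strong sense of Section~\ref{s:cclo} and that convergence holds in the operator topology rather than merely pointwise. Since the Hausdorff locally convex structure on $\cL_{b}(\cE,\cF,S_0(Z))$ was already established when the seminorms $\ol{p}$ were introduced, only completeness requires proof. So let $(T_\alpha)_{\alpha\in A}$ be a Cauchy net in $\cL_{b}(\cE,\cF,S_0(Z))$, i.e.\ $\ol{p}(T_\alpha-T_\beta)\to 0$ for every $p\in S_0(Z)$. First I would produce a candidate limit pointwise: fixing $x\in\cE$, the estimate $\tl{p}\bigl((T_\alpha-T_\beta)x\bigr)\le \ol{p}(T_\alpha-T_\beta)\,\tl{p}(x)$ shows that $(T_\alpha x)_\alpha$ is Cauchy with respect to every seminorm $\tl{p}$, $p\in S_0(Z)$, hence a Cauchy net in $\cF$; since $\cF$ is a VH-space it is complete, so $(T_\alpha x)_\alpha$ converges and we may set $Tx:=\lim_\alpha T_\alpha x\in\cF$. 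The map $T$ is linear, as $T(ax+by)=\lim_\alpha(aT_\alpha x+bT_\alpha y)=aTx+bTy$ by continuity of the vector space operations on $\cF$ and uniqueness of limits.

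Next I would check that $T$ is bounded, i.e.\ $T\in\cL_{b}(\cE,\cF,S_0(Z))$. Fix $p\in S_0(Z)$. A Cauchy net with respect to the seminorm $\ol{p}$ is eventually bounded, so there are $\alpha_0\in A$ and $M_p\ge 0$ with $\ol{p}(T_\alpha)\le M_p$ for all $\alpha\ge\alpha_0$. Then $\tl{p}(T_\alpha x)\le M_p\,\tl{p}(x)$ for every $x\in\cE$ and every $\alpha\ge\alpha_0$, and since $\tl{p}$ is continuous on $\cF$ while $T_\alpha x\to Tx$, letting $\alpha$ run along the net gives $\tl{p}(Tx)\le M_p\,\tl{p}(x)$. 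Hence $\ol{p}(T)\le M_p<\infty$ for each $p\in S_0(Z)$, so $T$ is $S_0(Z)$-bounded.

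Finally I would show $T_\alpha\to T$ in $\cL_{b}(\cE,\cF,S_0(Z))$. Given $p\in S_0(Z)$ and $\epsilon>0$, choose $\alpha_1$ with $\ol{p}(T_\alpha-T_\beta)<\epsilon$ whenever $\alpha,\beta\ge\alpha_1$; then for fixed $\beta\ge\alpha_1$ and any $x\in\cE$ one has $\tl{p}\bigl((T_\alpha-T_\beta)x\bigr)\le\epsilon\,\tl{p}(x)$ for all $\alpha\ge\alpha_1$, and passing to the limit in $\alpha$ yields $\tl{p}\bigl((T-T_\beta)x\bigr)\le\epsilon\,\tl{p}(x)$, whence $\ol{p}(T-T_\beta)\le\epsilon$ for all $\beta\ge\alpha_1$. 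This proves $T_\beta\to T$ and hence completeness. There is no serious obstacle here; the only points needing attention are the repeated passages to the limit inside inequalities involving $\tl{p}$, which are legitimate because each $\tl{p}$ is continuous on $\cF$ and $T_\alpha x\to Tx$ there, and the (implicit) fact that completeness of $\cF$ — equivalently, of both $\cF$ and the admissible space $Z$ — is exactly what makes the pointwise limits exist in the first place.
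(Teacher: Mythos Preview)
Your proof is correct and follows essentially the same standard completeness argument as the paper: define $T$ pointwise using completeness of $\cF$, then pass to the limit in the Cauchy inequality $\tl{p}((T_\alpha-T_\beta)x)\le\epsilon\,\tl{p}(x)$ to obtain both boundedness of the limit and convergence in the operator seminorms. The only cosmetic difference is that the paper first shows $T_\alpha-T\in\cL_b(\cE,\cF,S_0(Z))$ and deduces $T\in\cL_b(\cE,\cF,S_0(Z))$ as a difference, whereas you establish boundedness of $T$ directly via eventual $\ol{p}$-boundedness of the Cauchy net; both routes are equivalent.
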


\begin{proof}
We only have to prove completeness and the proof goes using the same steps as in the corresponding proof for Banach spaces. For completeness, we briefly discuss the details. Let $(T_i)_{i\in \cI}\subset \cL_b(\cE,\cF,S_0(Z))$ be a Cauchy net. Then for any $p\in S_0(Z)$ and $\epsilon > 0$, there exists $i_0\in \cI$ such that $\ol{p}(T_i - T_j) < \epsilon$ for all $i,j > i_0$, therefore $\tl{p}(T_ix - T_jx)\leq \epsilon \tl{p}(x)$ for any $x\in\cE$. It follows that $(T_ix)_{i\in\cI}$ is a Cauchy net in $\cF$ for each $x\in\cE$, so $T_ix \ra y$ for some $y\in\cF$ by completeness of $\cF$. Let $T\colon \cE \ra \cF$ be the linear operator $Tx:=y$. 
For any $x\in\cE$, taking the limit over $j$ in the second inequality above, by continuity of $\tl{p}$ we obtain $\tl{p}(T_ix - Tx)\leq \epsilon \tl{p}(x)$ and hence $T_i - T \in \cL_b(\cE,\cF,S_0(Z))$ for $i > i_0$. Consequently $T=T_i - (T_i - T) \in \cL_b(\cE,\cF,S_0(Z))$ and clearly $T_i \ra T$ in $\cL_b(\cE,\cF,S_0(Z))$, which finishes the proof. 
\end{proof}

\begin{proposition}\label{p:bddadjcomp}
Let $\cF$ be a VH-space and $\cE$ be a topological VE-space over the admissible space $Z$. Then $\cL_{b}^{*}(\cE,\cF,S_0(Z))$ is a complete Hausdorff locally convex $*$-space. 
\end{proposition}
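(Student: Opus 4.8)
The plan is to exhibit $\cL_b^*(\cE,\cF,S_0(Z))$ as a closed $*$-subspace of $\cL_b(\cE,\cF,S_0(Z))$, which is complete by Proposition \ref{p:bddcomp}. Since a closed subspace of a complete Hausdorff locally convex space is complete, and since the Hausdorff locally convex $*$-space structure of $\cL_b^*(\cE,\cF,S_0(Z))$ has already been recorded after Lemma \ref{l:autobd}, this will be enough; the whole question thus reduces to showing that the $\cL_b(\cE,\cF,S_0(Z))$-limit of a Cauchy net of bounded adjointable operators is again adjointable.

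Accordingly, I would start from a Cauchy net $(T_i)_{i\in\cI}$ in $\cL_b^*(\cE,\cF,S_0(Z))$. By Proposition \ref{p:bddcomp} it converges, in $\cL_b(\cE,\cF,S_0(Z))$, to a bounded operator $T\colon\cE\ra\cF$; in particular $T_ie\ra Te$ in $\cF$ for every $e\in\cE$. By Lemma \ref{l:autobd} one has $\ol p(T_i^*-T_j^*)=\ol p(T_i-T_j)$ for every $p\in S_0(Z)$, so $(T_i^*)_{i\in\cI}$ is Cauchy in $\cL_b(\cF,\cE,S_0(Z))$, and from $\tl p(T_i^*f-T_j^*f)\le\ol p(T_i-T_j)\,\tl p(f)$ the net $(T_i^*f)_{i\in\cI}$ is seen to be Cauchy in $\cE$ for each fixed $f\in\cF$.

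Next I would manufacture a candidate adjoint. Embedding $\cE$ as a dense subspace of its VH-space completion $\cH$ (Theorem 2 in \cite{Loynes1}), each net $(T_i^*f)$ converges in $\cH$, say to $Sf$, and one checks that $S\colon\cF\ra\cH$ is linear and bounded. Passing to the limit in the adjointness relation $[T_ie,f]_\cF=[e,T_i^*f]_\cE$ then gives $[Te,f]_\cF=[e,Sf]_\cH$ for all $e\in\cE$ and $f\in\cF$: the left-hand side converges since $T_ie\ra Te$ in $\cF$, while the right-hand side is Cauchy, hence convergent, in $Z$ because the Schwarz inequality of Lemma \ref{l:schwarz} yields $p([e,T_i^*f-T_j^*f])\le 4\,\tl p(e)\,\tl p(T_i^*f-T_j^*f)$ and $Z$ is complete. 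By non-degeneracy of the $Z$-gramian, $S$ is the only map that can possibly serve as an adjoint of $T$.

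The step I expect to be the main obstacle is to prove that $Sf$ actually lies in $\cE$, and not merely in the completion $\cH$ — equivalently, that each Cauchy net $(T_i^*f)_{i\in\cI}\subset\cE$ converges within $\cE$ (this is immediate if $\cE$ is itself complete, i.e.\ a VH-space). This is the point at which the argument must use more than mere boundedness of the $T_i$, and I would look for it in the combination of the symmetry $\ol p(T_i^*-T_j^*)=\ol p(T_i-T_j)$, the adjointability of each individual $T_i$, and the completeness of $\cF$ and $Z$. Once $S(\cF)\subseteq\cE$ is established, one sets $T^*:=S$, so that $[Te,f]_\cF=[e,T^*f]_\cE$ for all $e,f$ and hence $T\in\cL_b(\cE,\cF,S_0(Z))\cap\cL^*(\cE,\cF)=\cL_b^*(\cE,\cF,S_0(Z))$; since the topology of $\cL_b^*(\cE,\cF,S_0(Z))$ is inherited from $\cL_b(\cE,\cF,S_0(Z))$, the convergence $T_i\ra T$ then already takes place in $\cL_b^*(\cE,\cF,S_0(Z))$, which completes the proof.
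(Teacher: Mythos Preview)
Your strategy coincides with the paper's: show that $\cL_b^*(\cE,\cF,S_0(Z))$ is closed in the complete space $\cL_b(\cE,\cF,S_0(Z))$ by producing a limit $W$ of the adjoint net $(T_i^*)$ and then verifying $[Te,f]=[e,Wf]$. The paper does exactly this, asserting that ``$(T_i^*)$ is a Cauchy net in $\cL_b(\cE,\cF,S_0(Z))$'' and invoking Proposition~\ref{p:bddcomp} to obtain $W$. Modulo the evident misprint (the adjoints live in $\cL_b(\cF,\cE,S_0(Z))$, not $\cL_b(\cE,\cF,S_0(Z))$), this is precisely the step you isolate as the obstacle: Proposition~\ref{p:bddcomp} yields completeness of $\cL_b(\cF,\cE,S_0(Z))$ only when the \emph{codomain} $\cE$ is a VH-space, which is not among the hypotheses. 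So the paper's own proof has the very gap you flag.

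Your worry is in fact not repairable under the stated hypotheses. Take $Z=\CC$, $\cE=c_{00}$ with the $\ell^2$ inner product, $\cF=\ell^2$, fix $a\in\ell^2\setminus c_{00}$ with truncations $a_n\in c_{00}$, and set $T_n x:=[a_n,x]\,e_1$. Each $T_n$ is bounded and adjointable with $T_n^*y=[e_1,y]\,a_n\in c_{00}$, and $\|T-T_n\|\le\|a-a_n\|\to 0$ in $\cL_b(c_{00},\ell^2)$, where $Tx=[a,x]\,e_1$; but $T$ has no adjoint $\ell^2\to c_{00}$, since such an adjoint would have to send $e_1$ to $a\notin c_{00}$. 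Hence $\cL_b^*(c_{00},\ell^2)$ is not complete. The good news is that the paper only \emph{uses} this proposition (in Corollaries~\ref{c:bddadjloc} and~\ref{c:bddadjhilb}) with $\cE$ a VH-space; under that extra assumption your ``main obstacle'' disappears, the limit $S$ already lands in $\cE$, and your argument (which is then identical to the paper's) is complete.
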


\begin{proof}
We only have to show completeness. For this, we show that, $\cL_{b}^{*}(\cE,\cF,S_0(Z))$ is closed in $\cL_{b}(\cE,\cF,S_0(Z))$. Let $(T_i)_{i\in\cI}\in\cL_{b}^{*}(\cE,\cF,S_0(Z))$ be a convergent net with $T_i\ra T$, $T\in\cL_{b}(\cE,\cF,S_0(Z))$. For any $p\in S_0(Z)$ since $\ol{p}(T_i^*)=\ol{p}(T_i)$ for all $i\in\cI$ by Lemma \ref{l:autobd}, $(T_i^*)_{i\in\cI}$ is a Cauchy net in $\cL_{b}(\cE,\cF,S_0(Z))$. By completeness of $\cL_{b}(\cE,\cF,S_0(Z))$ from Proposition \ref{p:bddcomp}, there exists $W\in \cL_{b}(\cE,\cF,S_0(Z))$ such that $T_i^* \ra W$. Then we have, for any $x\in\cE$ and $y\in\cF$
\begin{align*}
[Tx,y] &= \lim_{i} [T_ix,y] \\
&=\lim_{i} [x,T_i^*y] = [x,Wy]
\end{align*}
as $T_ix\ra Tx$ for each $x$, $T_i^*y\ra Wy$ for each $y$, $S_0(Z)$ is generating, and the gramian is continuous. Consequently, $[Tx,y]=[x,Wy]$ for all $x\in\cE$ and $y\in \cF$. Therefore $T$ is adjointable with $T^*=W$, completing the proof.
\end{proof}

\begin{proposition}\label{p:bddadjpreloc}
Let $\cE$ be a VE-space over the topologically ordered $*$-space $Z$. Then $\cL_{b}^{*}(\cE,S_0(Z))$ is a pre-locally $C^*$-algebra.
\end{proposition}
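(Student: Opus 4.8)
The plan is to verify the defining properties of a pre-locally $C^*$-algebra directly: $\cL_b^*(\cE,S_0(Z))$ must be a $*$-algebra carrying a Hausdorff topology generated by a family of $C^*$-seminorms. Most of the underlying structure is already in place. From the discussion preceding Lemma \ref{l:autobd} we know that $\cL_b^*(\cE,S_0(Z)) = \cL_b(\cE,\cE,S_0(Z)) \cap \cL^*(\cE)$ is a Hausdorff locally convex space with the seminorms $\{\ol p \mid p\in S_0(Z)\}$, and that it is a $*$-space under the adjoint operation; Lemma \ref{l:autobd} gives $\ol p(T^*) = \ol p(T)$, so each $\ol p$ is a $*$-seminorm. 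So the remaining tasks are: (i) $\cL_b^*(\cE,S_0(Z))$ is closed under composition and the composition is compatible with the seminorms in a submultiplicative way, so that it is an algebra with a jointly continuous product; and (ii) each $\ol p$ satisfies the $C^*$-condition $\ol p(T^*T) = \ol p(T)^2$.

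First I would handle the algebra structure. If $S,T\in\cL_b^*(\cE,S_0(Z))$, then $ST\in\cL^*(\cE)$ with $(ST)^* = T^*S^*$, and for any $p\in S_0(Z)$ and $x\in\cE$ we have $\tl p(STx)\le \ol p(S)\,\tl p(Tx)\le \ol p(S)\,\ol p(T)\,\tl p(x)$, so $ST\in\cL_b^*(\cE,S_0(Z))$ with $\ol p(ST)\le \ol p(S)\,\ol p(T)$. Thus each $\ol p$ is submultiplicative and the product is jointly continuous; together with the $*$-space structure already noted, $\cL_b^*(\cE,S_0(Z))$ is a locally convex $*$-algebra whose topology is given by the $*$-seminorms $\{\ol p\}$.

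The real content is the $C^*$-condition, and this is where I expect the main obstacle to lie. Fix $p\in S_0(Z)$ and $T\in\cL_b^*(\cE,S_0(Z))$. Submultiplicativity and the $*$-property give the easy inequality $\ol p(T^*T)\le \ol p(T^*)\,\ol p(T) = \ol p(T)^2$. For the reverse inequality I would argue, for $x\in\cE$, that
\begin{equation*}
\tl p(Tx)^2 = p([Tx,Tx]) = p([T^*Tx,x]) \le \tl p(T^*Tx)\,\tl p(x) \le \ol p(T^*T)\,\tl p(x)^2,
\end{equation*}
where the first inequality is Lemma \ref{l:schwarzforposop} applied to the positive operator $T^*T$ (positivity since $[T^*Tx,x] = [Tx,Tx]\ge 0$), and this is exactly the point where one needs the sharp Schwarz inequality valid for positive operators rather than the weaker constant-$4$ version of Lemma \ref{l:schwarz}. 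Taking the square root gives $\tl p(Tx)\le \ol p(T^*T)^{1/2}\,\tl p(x)$ for all $x$, hence $\ol p(T)\le \ol p(T^*T)^{1/2}$, i.e. $\ol p(T)^2\le \ol p(T^*T)$. Combining the two inequalities yields $\ol p(T^*T) = \ol p(T)^2$, so each $\ol p$ is a $C^*$-seminorm; the family $\{\ol p \mid p\in S_0(Z)\}$ is Hausdorff and generates the topology, so $\cL_b^*(\cE,S_0(Z))$ is a pre-locally $C^*$-algebra. I would also remark that the choice of generating family $S_0(Z)$ is immaterial here, consistent with the convention fixed before Lemma \ref{l:autobd}.
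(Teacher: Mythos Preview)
Your proof is correct and follows essentially the same approach as the paper: verify submultiplicativity of each $\ol p$ to get the $*$-algebra structure and the easy inequality $\ol p(T^*T)\le\ol p(T)^2$, then use Lemma~\ref{l:schwarzforposop} applied to the positive operator $T^*T$ to obtain the reverse inequality and hence the $C^*$-condition. The paper's argument is identical in substance, so there is nothing further to add.
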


\begin{proof}
$\cL_{b}^{*}(\cE,S_0(Z))$ is a Hausdorff locally convex $*$-space by Lemma \ref{l:autobd}. For any $p\in S_0(Z)$ and $T_1, T_2 \in \cL_{b}^{*}(\cE,S_0(Z))$ it is easy to see that we have $\ol{p}(T_1T_2) \leq \ol{p}(T_1)\ol{p}(T_2)$, and it follows that $\cL_{b}^{*}(\cE,S_0(Z))$ is, in particular, a $*$-algebra. In addition, for any $p\in S_0(Z)$, $T\in \cL_{b}^{*}(\cE,S_0(Z))$ and $x\in\cE$ we have 
\begin{equation*}
\tl{p}(T^*Tx)\leq \ol{p}(T^*)\ol{p}(T)\tl{p}(x)=\ol{p}(T)^2\tl{p}(x)
\end{equation*}
, where we use Lemma \ref{l:autobd} for the equality, and we obtain $\ol{p}(T^*T)\leq p(T)^2$. Finally, for any $p\in S_0(Z)$, $T\in \cL_{b}^{*}(\cE)$ and $x\in\cE$ we have
\begin{align*}
\tl{p}(Tx)^2 &=p([Tx,Tx])=p([T^*Tx,x]) \\
&\leq \tl{p}(T^*Tx)\tl{p}(x) \leq \ol{p}(T^*T)\tl{p}(x)^2
\end{align*}
where for the first inequality we used Lemma \ref{l:schwarzforposop}. Therefore $\ol{p}(T)^2\leq \ol{p}(T^*T)$ and combining the two inequalities we obtain $\ol{p}(T)^2 = \ol{p}(T^*T)$ and the $C^*$ identity is shown for the seminorms $\ol{p}$, completing the proof.
\end{proof}

In the end of this section, we provide some immediate corollaries of the facts proven so far.

\begin{corollary}\label{c:bddadjloc} 
Let $\cE$ be a VH-space over the admissible space $Z$. Then $\cL_{b}^{*}(\cE,S_0(Z))$ and in particular $\cL_{b}^{*}(\cE)$ is a locally $C^*$-algebra.
\end{corollary}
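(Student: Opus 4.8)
The plan is simply to combine Propositions \ref{p:bddadjpreloc} and \ref{p:bddadjcomp}. Recall from Subsection \ref{ss:lcalhm} that a locally $C^*$-algebra is, by definition, a $*$-algebra carrying a complete Hausdorff topology induced by a family of $C^*$-seminorms; equivalently, a complete pre-locally $C^*$-algebra. So it suffices to check that $\cL_b^*(\cE,S_0(Z))$ is simultaneously a pre-locally $C^*$-algebra and complete, with respect to one and the same locally convex topology, namely the one generated by the seminorms $\{\ol p \mid p\in S_0(Z)\}$.

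First I would invoke Proposition \ref{p:bddadjpreloc}. Since a VH-space over the admissible space $Z$ is in particular a VE-space over the topologically ordered $*$-space $Z$, that proposition applies and shows that $\cL_b^*(\cE,S_0(Z))$ is a pre-locally $C^*$-algebra: it is a $*$-algebra, each $\ol p$ with $p\in S_0(Z)$ is a $C^*$-seminorm on it, and together they define a Hausdorff locally convex topology (Hausdorffness being ensured by Lemma \ref{l:autobd}).

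Next I would invoke Proposition \ref{p:bddadjcomp} with $\cF=\cE$. A VH-space over an admissible space is a complete topological VE-space, so the hypotheses of that proposition (with $\cF$ a VH-space and $\cE$ a topological VE-space, both over the admissible space $Z$) are met, and it yields that $\cL_b^*(\cE,S_0(Z))=\cL_b^*(\cE,\cE,S_0(Z))$ is complete in the topology generated by $\{\ol p\}_{p\in S_0(Z)}$. Combining the two, $\cL_b^*(\cE,S_0(Z))$ is a complete pre-locally $C^*$-algebra, hence a locally $C^*$-algebra. The particular assertion then follows on taking $S_0(Z)=S_*(Z)$, since in that case $\cL_b^*(\cE,S_*(Z))=\cL_b^*(\cE)$ by the conventions fixed just before Lemma \ref{l:autobd}.

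There is no genuine obstacle here: the corollary is essentially a bookkeeping consequence of the two propositions. The only point deserving a line of care is the compatibility of the two topological structures being invoked, but this is automatic, since in both Proposition \ref{p:bddadjpreloc} and Proposition \ref{p:bddadjcomp} the topology on $\cL_b^*(\cE,\cF,S_0(Z))$ is by construction the one given by the family $\{\ol p\}_{p\in S_0(Z)}$.
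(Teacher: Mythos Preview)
Your proof is correct and follows exactly the paper's own approach: the paper's proof consists of the single sentence ``Combine the statements of Propositions \ref{p:bddadjcomp} and \ref{p:bddadjpreloc}.'' Your added remarks about checking hypotheses and the compatibility of the topologies are accurate and harmless elaborations of this.
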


\begin{proof}
Combine the statements of Propositions \ref{p:bddadjcomp} and \ref{p:bddadjpreloc}. 
\end{proof}

\begin{corollary}\label{c:bddadjhilb} 
Let $\cE$ and $\cF$ be two VH-spaces over the admissible space $Z$. 
Then $\cL_{b}^{*}(\cE,\cF,S_0(Z))$ is a locally Hilbert $C^*$-module over the locally $C^*$-algebra 
$\cL_{b}^{*}(\cE,S_0(Z))$. In particular, $\cL_{b}^{*}(\cE,\cF)$ is a locally Hilbert $C^*$-module over the locally $C^*$-algebra 
$\cL_{b}^{*}(\cE)$.
\end{corollary}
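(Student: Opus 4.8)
The plan is to realize $\cL_b^*(\cE,\cF,S_0(Z))$ as a corner of a locally $C^*$-algebra built from the direct sum VH-space $\cG:=\cE\oplus\cF$, and then to invoke the standard fact that, for projections $e,f$ in a locally $C^*$-algebra $\cB$, the corner $f\cB e$ is a Hilbert module over the corner algebra $e\cB e$ with gramian $\langle x,y\rangle:=x^*y$ and right action by multiplication. First one checks that $\cG=\cE\oplus\cF$, with $Z$-gramian $[(e,f),(e',f')]_\cG:=[e,e']_\cE+[f,f']_\cF$, is again a VH-space over $Z$: axioms (ve1)--(ve3) are immediate, the definiteness half of (ve1) using that $Z_+$ is strict, and completeness of $\cG$ follows because, for every $p\in S(Z)$, $p([e,e]_\cE)\le p([e,e]_\cE+[f,f]_\cF)$ (and symmetrically), so the coordinate projections are continuous and a Cauchy net in $\cG$ is Cauchy in each of the complete spaces $\cE$, $\cF$, with admissibility of $Z$ handling the target. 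By Corollary~\ref{c:bddadjloc}, $\cB:=\cL_b^*(\cG,S_0(Z))$ is then a locally $C^*$-algebra.

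Next I would bring in the projections $e_1,e_2\in\cB$ given by $e_1(e,f)=(e,0)$, $e_2(e,f)=(0,f)$ (each bounded and adjointable --- again using increasing seminorms --- with $e_1^*=e_1$, $e_2^*=e_2$, $e_1e_2=0$, $e_1+e_2=I$), and show that $A\mapsto\bigl((e,f)\mapsto(Ae,0)\bigr)$ is a $*$-isomorphism of $\cL_b^*(\cE,S_0(Z))$ onto $e_1\cB e_1$, while $\Phi\colon T\mapsto\bigl((e,f)\mapsto(0,Te)\bigr)$ is a bijection of $\cL_b^*(\cE,\cF,S_0(Z))$ onto $e_2\cB e_1$. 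One then checks that these identifications carry the whole structure across: composition in $\cB$ becomes the composition $(T,A)\mapsto TA$ of operators; the involution becomes $T\mapsto T^*$; the product $\Phi(S)^*\Phi(T)\in e_1\cB e_1$ corresponds to $S^*T\in\cL_b^*(\cE,S_0(Z))$; and, for each $p\in S_0(Z)$, the operator seminorm $\ol p$ on $e_2\cB e_1$ transports to $\ol p$ on $\cL_b^*(\cE,\cF,S_0(Z))$ --- here one uses once more that $p([e,e]_\cE)\le p([e,e]_\cE+[f,f]_\cF)$ to see that the seminorm of $\Phi(T)$ is already attained on vectors of the form $(e,0)$.

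It then remains to prove the abstract corner statement. The module identities $\langle x,ya\rangle=\langle x,y\rangle a$ and $\langle x,y\rangle^*=\langle y,x\rangle$ are purely formal, and $\langle\cdot,\cdot\rangle$ is conjugate-linear in the first and linear in the second variable by inspection. Definiteness of $\langle\cdot,\cdot\rangle$ follows from the $C^*$-identity $\ol p(x^*x)=\ol p(x)^2$, which is proved exactly as in Proposition~\ref{p:bddadjpreloc} (using Lemmas~\ref{l:schwarzforposop} and~\ref{l:autobd}), together with the Hausdorff property. The same $C^*$-identity shows that the locally convex topology of $f\cB e$ defined by the seminorms $\ol p$ coincides with the topology induced by the gramian, so $f\cB e$ is a topological VE-space over $e\cB e$ in the required sense. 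Finally, $f\cB e=\ker(b\mapsto b-fbe)$ is the kernel of a continuous linear map on the complete space $\cB$, hence itself complete; equivalently one can deduce completeness of $\cL_b^*(\cE,\cF,S_0(Z))$ directly from Proposition~\ref{p:bddadjcomp}.

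The one point I expect to require actual work is the positivity axiom (ve1): that $\langle x,x\rangle=x^*x$ is positive \emph{as an element of the ordered $*$-space $e_1\cB e_1$}, not merely of $\cB$. Inside $\cB$ this is automatic, since $\cB$ is a locally $C^*$-algebra and $x^*x$ has the form $y^*y$. To descend to the corner, note that $a:=x^*x$ lies in the closed $*$-subalgebra $e_1\cB e_1$ (indeed $a=e_1(x^*e_2x)e_1$), hence so does the (locally) $C^*$-subalgebra it generates, and in particular the square root $a^{1/2}$ computed via the continuous functional calculus of $\cB$ (valid in any locally $C^*$-algebra); then $a=(a^{1/2})^*a^{1/2}$ exhibits $a$ as positive in $e_1\cB e_1$ itself. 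Alternatively, one argues seminorm by seminorm in the $C^*$-completions of $\cB$. With positivity in hand, $e_2\cB e_1$ --- and therefore $\cL_b^*(\cE,\cF,S_0(Z))$ --- is a complete pre-Hilbert module over the locally $C^*$-algebra $e_1\cB e_1\cong\cL_b^*(\cE,S_0(Z))$, i.e.\ a locally Hilbert $C^*$-module in the sense of Subsection~\ref{ss:lcalhm} and Examples~\ref{e:evhs}(6). The ``in particular'' assertion is just the special case $S_0(Z)=S_*(Z)$, for which by definition $\cL_b^*(\cE,\cF)=\cL_b^*(\cE,\cF,S_*(Z))$ and $\cL_b^*(\cE)=\cL_b^*(\cE,S_*(Z))$.
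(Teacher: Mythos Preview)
Your proof is correct and takes a genuinely different route from the paper's. The paper's argument is extremely brief: it simply defines the right module action $TA$ and the gramian $[T_1,T_2]:=T_1^*T_2$ on $\cL_b^*(\cE,\cF,S_0(Z))$, asserts that ``routine checking'' shows this is a gramian respecting the module action, and cites Proposition~\ref{p:bddadjcomp} for completeness. Your approach instead builds the direct sum VH-space $\cG=\cE\oplus\cF$, invokes Corollary~\ref{c:bddadjloc} to get the ambient locally $C^*$-algebra $\cB=\cL_b^*(\cG,S_0(Z))$, and realises $\cL_b^*(\cE,\cF,S_0(Z))$ as the corner $e_2\cB e_1$, transporting the Hilbert module structure from there. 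The two approaches land on literally the same gramian and module action (your $\Phi(S)^*\Phi(T)$ corresponds to $S^*T$), but yours makes explicit the one genuinely non-formal point that the paper sweeps into ``routine checking'': that $T^*T$ is positive \emph{in the locally $C^*$-algebra} $\cL_b^*(\cE,S_0(Z))$, not merely a positive operator in the sense of \eqref{e:pos}. Your square-root/functional-calculus argument inside $\cB$ handles this cleanly. The price is a good deal of extra scaffolding (the direct sum, the corner identifications, the seminorm transport), whereas the paper's proof fits in three lines by trusting the reader to supply that positivity step.
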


\begin{proof}
Given $T \in \cL_{b}^{*}(\cE,\cF,S_0(Z))$ and $A\in \cL_{b}^{*}(\cE,S_0(Z))$, define a right module action of 
$\cL_{b}^{*}(\cE,S_0(Z))$ on $\cL_{b}^{*}(\cE,\cF,S_0(Z))$ by $TA$. Define a pairing 
$[\cdot,\cdot]\colon \cL_{b}^{*}(\cE,\cF,S_0(Z)) \times \cL_{b}^{*}(\cE,\cF,S_0(Z)) \ra \cL_{b}^{*}(\cE,S_0(Z))$ 
by $[T_1, T_2]:= T_1^*T_2$. Then routine checking shows that $[\cdot,\cdot]$ is a 
gramian respecting the right module action. Together with Proposition \ref{p:bddadjcomp} this 
proves the corollary. 
\end{proof}

\section{Adjointable Operators of Barreled Topological VE-Spaces}\label{s:adop}

\subsection{Definition and Examples of Barreled Topological VE-Spaces.}\label{ss:debarvh}

Barreled spaces were introduced by Bourbaki in \cite{Bourbaki}. Recall that, a set in a topological vector space is called a \emph{barrel} if it is closed, absorbent, balanced and convex. A locally convex space is called \emph{barreled} if each barrel is a neighborhood of $0$. In this article, we will use standart results on barreled locally convex spaces, see e.g. Chapter 11 of \cite{Jarchow} or \cite{Narici}. We will refer to these results as needed. 

Let $\cE$ be a topological VE-space over a topologically ordered $*$-space $Z$. If the underlying locally convex topology of $\cE$ is barreled, then $\cE$ is called a \emph{barreled topological VE-space}. If $Z$ is a normed ordered $*$-space and the underlying normed topology of $\cE$ is barreled, then we call $\cE$ a \emph{barreled normed VE-space}. 

Note that any normed VH-space over a Banach ordered $*$-space, as well as any subspace of it is automatically barreled as it is Banach. In particular, any Hilbert $C^*$-module and its closed linear submanifolds are examples. Given a countable family of VH-spaces, by forming their countable direct product VH-space, see Examples 1.4. (4) of \cite{AyGheondea2} for the details, we obtain examples of barreled VH-spaces, as in particular  the underlying locally convex space is a Fr{\'e}chet space, which is well known to be barreled. Also, any barreled subspace of the previous examples are barreled topological VE-spaces. In the following, we provide some non-examples. 

\begin{example}\label{e:nbtopve}
It is a natural question that, given a barreled topologically ordered $*$-space $Z$, if we automatically have that any topological VE-space over $Z$ is also barreled or not. It is not difficult to see that there is no such permanence property, simply by considering any non-barreled subspace of an inner product space. In the following, we provide such an example. 

Consider the complex sequence space of all finite sequences $c_{00}$ with the $l^2$ inner product, so $c_{00}$ is an inner product space and  hence a topological VE-space over $\CC$. By considering the sequence of linear bounded functionals $f_n(x):=\sum_{i=1}^{n}x_i$, $n=1,2,\dots$, we see that the set $(f_n)_{n=1}^{\infty}$ is pointwise bounded, but since $\Vert f_n \Vert = \sqrt{n}$, it is not uniformly bounded. Since the Principle of Uniform Boundedness fails, $c_{00}$ is not barreled. 

Notice that, this example in particular shows that, a topological VE-space need not be barreled. 
\end{example}
Another natural question is, in the case when we have a VH-space $\cH$ if we automatically have barreledness of $\cH$ or not. Notice that, a counter example must be out of the category of normed spaces, since a normed VH-space, being a Banach space in particular, is automatically barreled. In the following, we provide an example of a non-barreled locally $C^*$-algebra, hence an example of a non-barreled locally Hilbert $C^*$-module, and hence an example of a non-barreled VH-space, see Examples \ref{e:evhs}. 

\begin{example}\label{e:nbvhs}
We adopt Example 11.12.6 in \cite{Narici}, and observe that it is a locally $C^*$-algebra. 
For the convenience of the reader, we provide all the details here. Let $T$ be a set of uncountable cardinality and let $X$ be the complex vector space $X\colon = \{ x\in \CC^{T} \mid x(t)=0 \,\, \textrm{except for finitely many t} \}$. Define product of two elements in $X$ pointwise; i.e. $(xy)(t):=x(t)y(t)$, clearly the product is commutative. Similarly, the involution $*$ is defined pointwise; $x^*(t):=\ol{x(t)}$. It is easy to check that $X$ is a $*$-algebra. For each $t\in T$, consider the seminorm $p_{t}(x):=\vert x(t) \vert$ and notice that it is a $C^*$-seminorm. The family of $C^*$-seminorms $\{p_{t}\}_{t\in T}$ turn $X$ into a pre-locally $C^*$-algebra. However, by direct checking, it is not difficult to see that $X$ is a complete locally convex space, therefore $X$ is a locally $C^*$-algebra. 

Now consider the set $B:=\{ x\in X \mid \sum_{t\in T} \vert x(t) \vert \leq 1 \}$. It is straightforward to check that $B$ is a barrel in $X$. We show that $B$ cannot contain a basic neighborhood $U:= \{ x\in X \mid p_{t}(x) \leq r_{t}\,\,\textrm{for all t} \}$ of $0$, where $r_{t} > 0$, hence it cannot be a neighborhood of $0$: Assume $B \supseteq U$ for some $U$. Consider the sets $A_n:=\{ t\in T \mid \vert r_t > 1/n \vert \}$ for $n \geq 2$. Since $T$ is uncountable it is clear that there exists $n_{0}$ such that $A_{n_0}$ contains infinitely many elements. Hence there exists a finite subset $H\subset T$ such that $\sum_{t\in H}r_t > 1$. Now form the element
$x(t) = 
\begin{cases}
r_t, \,\,t\in H \\
0, \,\, t\notin H
\end{cases}
$
which is in $U$, but not in $B$. Therefore $X$ is not barreled. 
\end{example}


\subsection{Automatic Boundedness of Adjointable Operators of Barrelled Topological VE-Spaces.}\label{ss:adop}

In this section, we prove the main result of this article, Theorem \ref{t:adopbar}, which states that, an adjointable operator $T\colon \cE \ra \cF$ from a barreled topological VE-space $\cE$ to a topological VE-space $\cF$ over the same topologically ordered $*$-space $Z$ is automatically bounded, hence continuous. 

 We now do some preparation in order to prove Theorem \ref{t:adopbar}. 
For a topologically ordered $*$-space $Z$, let $p\in S_*(Z)$ and consider the kernel of $p$, $I_p:=\{ z\in Z \mid p(z)=0 \}$. It is easy to see that $I_p$ is a closed ordered $*$-subspace of $Z$. In addition, since $p$ is an increasing $*$-seminorm it follows that $I_p$ is an \emph{order ideal}, that is, $I_p$ is selfadjoint and  if $z_0\in I_p$ with $z_0 \geq 0$, for any $z\in Z$ with $0\geq z \geq z_0$ we have $z\in I_p$. 

It follows from Example 1.2(9) of \cite{AyGheondea1} that, the quotient space $Z_p:=Z/I_p$ is an ordered $*$-space with the cone $Z_p^{+}:=Z^{+}/I_p$, and the involution of $Z_p$ is defined by $(z+I_p)^*:=z^*+I_p$. 

It is standard that, $\Vert z+I_p \Vert_p := p(z)$ defines a norm on $Z_p$ for any $p\in S_*(Z)$. It is easy to see that $\Vert \cdot \Vert_p$ is a $*$-norm, as $p$ is a $*$-seminorm, and to see that it is increasing, we just observe that for $y,z \in Z^+$ we have $\Vert z+I_p \Vert_p =p(z) \leq p(z+y) = \Vert (z+y)+I_p \Vert_p = \Vert (z+I_p) + (y+I_p) \Vert_p$. 

Therefore the ordered $*$-space $(Z_p,Z_p^{+})$ with the norm $\Vert \cdot \Vert_p$ satisfies all axioms (a3) -- (a6) of a topologically ordered $*$-space except for (a5). By Lemma \ref{l:clcone} it follows that for any $p\in S_*(Z)$ the space $(Z_p,\ol{Z_p^{+}})$ where $\ol{Z_p^{+}}$ is the closure of $Z_p^{+}$ is a normed ordered $*$-space with the norm $\Vert \cdot \Vert_p$ an increasing $*$-norm.  

We record the discussions above as a lemma:

\begin{lemma}\label{l:tosquot}
Let $Z$ be a topologically ordered $*$-space and $p\in S_*(Z)$. Then the quotient space $(Z_p,Z_p^{+})$ is an ordered $*$-space. Moreover, the space $(Z_p,Z_p^{+},\Vert \cdot \Vert_p)$ satisfies all axioms of a normed ordered $*$-space except for closedness of its cone and by passing to the closure $\ol{Z_p^{+}}$ of the cone, the space $(Z_p,\ol{Z_p^{+}},\Vert \cdot \Vert_p)$ is a normed ordered $*$-space.
\end{lemma}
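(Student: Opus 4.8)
The plan is to assemble the ingredients already laid out before the statement and verify them carefully, in three stages. First I would check that $I_p := \ker p$ is not merely a closed $*$-subspace of $Z$ but an \emph{order ideal}: closedness is immediate from continuity of $p$, selfadjointness from $p$ being a $*$-seminorm, and the ideal property from the fact that $0\leq z\leq z_0\in I_p$ forces $0\leq p(z)\leq p(z_0)=0$ because $p$ is increasing. This order-ideal property is exactly what is needed to invoke Example 1.2(9) of \cite{AyGheondea1}, which then yields that the quotient $Z_p=Z/I_p$, equipped with the cone $Z_p^{+}=Z^{+}/I_p$ and the involution $(z+I_p)^*=z^*+I_p$, is an ordered $*$-space; here the only point requiring genuine care is the strictness $Z_p^{+}\cap -Z_p^{+}=\{0\}$, which is a consequence of the ideal property (if $z+I_p$ and $-(z+I_p)$ both lie in $Z_p^+$, write $z=a+i_1$, $-z=b+i_2$ with $a,b\in Z^+$, $i_1,i_2\in I_p$; then $0\leq a\leq a+b\in I_p$ gives $a\in I_p$, whence $z\in I_p$).

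Next I would verify that $\Vert z+I_p\Vert_p:=p(z)$ is well defined with the desired properties. Well-definedness follows from $|p(z)-p(w)|\leq p(z-w)$, which vanishes precisely when $z-w\in I_p$; the triangle inequality and positive homogeneity are inherited from $p$, and $\Vert z+I_p\Vert_p=0$ forces $z\in I_p$, i.e.\ $z+I_p=0$, so $\Vert\cdot\Vert_p$ is a genuine norm. It is a $*$-norm because $p$ is a $*$-seminorm, and it is increasing by the one-line computation $\Vert z+I_p\Vert_p=p(z)\leq p(z+y)=\Vert(z+y)+I_p\Vert_p$ valid for $y,z\in Z^+$, which passes to the quotient cone. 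At this point $(Z_p,Z_p^{+},\Vert\cdot\Vert_p)$ satisfies axioms (a1)--(a4) and (a6) of a normed ordered $*$-space; only (a5), closedness of the cone, may fail, since $Z^{+}/I_p$ need not be norm-closed even though $Z^{+}$ is.

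Finally I would invoke Lemma \ref{l:clcone}: the space $(Z_p,Z_p^{+},\Vert\cdot\Vert_p)$ meets its hypotheses (all of (a3)--(a6), with (a5) the only possible failure), so replacing $Z_p^{+}$ by its closure $\ol{Z_p^{+}}$ produces a topologically --- here normed --- ordered $*$-space, which is the assertion. The only mild subtlety is bookkeeping: one must observe that Lemma \ref{l:clcone} is applied in the normed category, so its conclusion upgrades to a normed (not merely topologically) ordered $*$-space, and that $\Vert\cdot\Vert_p$ is unchanged by the passage to the closure while remaining an increasing $*$-norm with respect to the enlarged cone. I do not expect any step to be a real obstacle; the construction is entirely routine once the order-ideal property of $I_p$ and the citation to \cite{AyGheondea1} are in hand.
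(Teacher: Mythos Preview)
Your proposal is correct and follows essentially the same route as the paper: the paper's proof consists of the words ``See the preceeding discussion,'' and that discussion is precisely the sequence of verifications you outline --- $I_p$ is an order ideal, Example 1.2(9) of \cite{AyGheondea1} gives the ordered $*$-space structure on the quotient, $\Vert\cdot\Vert_p$ is an increasing $*$-norm, and Lemma \ref{l:clcone} closes the cone. Your added detail on strictness (the argument via $0\leq a\leq a+b\in I_p$) is a welcome elaboration of a point the paper leaves to the cited example.
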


\begin{proof}
See the preceeding discussion.
\end{proof}

From now on we will use $Z_p$ for the space $(Z_p,\ol{Z_p^{+}},\Vert \cdot \Vert_p)$ as in Lemma \ref{l:tosquot}, where $p\in S_*(Z)$. 

Now let $\cE$ be a topological VE-space over the topologically ordered $*$-space $Z$ and $p\in S_*(Z)$. Following the notation in \cite{AyGheondea2}, define $\tl{I^{\cE}_p}:= \{ x\in E \mid [x,x]\in I_p\}$. Equivalently, $\tl{I^{\cE}_p} = \{ x\in E \mid \tl{p}(x)=0 \}$, where $\tl{p}$ is as in \eqref{e:qujeh}. We will use the notation $\tl{I_p}$ as well if the space $\cE$ is clear from the context. 

\begin{lemma}\label{l:vequot}
Let $\cE$ be a topological VE-space over the topologically ordered $*$-space $Z$ and let $p\in S_*(Z)$. Then $\tl{I_p}$ is a closed vector subspace of $\cE$ and the quotient $\cE_p:=\cE/\tl{I_p}$ is a normed VE-space over the normed ordered $*$-space $Z_p$, with norm given by $\Vert x+\tl{I_p} \Vert=\tl{p}(x)$ for any $x\in\cE$.
\end{lemma}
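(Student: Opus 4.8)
The plan is to verify each assertion in turn, transporting the relevant structure from $Z$ to the quotient $Z_p$ and from $\cE$ to $\cE_p$ via the seminorm $\tl p$. First I would check that $\tl I_p$ is a closed linear subspace: linearity follows from the fact that $\tl p$ is a seminorm on $\cE$ (as recalled after \eqref{e:qujeh}), so $\tl I_p=\ker\tl p$ is a subspace, and it is closed because $\tl p$ is continuous, being one of the defining seminorms of the topology of $\cE$. Consequently $\Vert x+\tl I_p\Vert:=\tl p(x)$ is a well-defined norm on $\cE_p=\cE/\tl I_p$ by the standard quotient-seminorm construction, and it is genuinely a norm (not merely a seminorm) precisely because we have quotiented out $\ker\tl p$.

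Next I would define the candidate $Z_p$-valued gramian on $\cE_p$ by
\begin{equation*}
[x+\tl I_p,\,y+\tl I_p]_{\cE_p}:=[x,y]_\cE+I_p\in Z_p,
\end{equation*}
and check it is well defined. The key point here is that if $\tl p(x)=0$ then $p([x,y]_\cE)=0$ for every $y\in\cE$: this is immediate from the Schwarz inequality of Lemma \ref{l:schwarz}, which gives $p([x,y])\le 4\tl p(x)\tl p(y)$. Hence changing the representative $x$ within its coset, or $y$ within its coset, changes $[x,y]_\cE$ only by an element of $I_p$, so the map into $Z_p=Z/I_p$ is well defined. Sesquilinearity (ve3) and the symmetry (ve2) are inherited directly from those of $[\cdot,\cdot]_\cE$ together with the definitions of the linear, involutive and order structures on $Z_p$ (the latter as recorded in Lemma \ref{l:tosquot} and the discussion preceding it, citing Example 1.2(9) of \cite{AyGheondea1}).

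For positivity (ve1) I would argue as follows. Given $x\in\cE$ we have $[x,x]_\cE\in Z^+$, so $[x,x]_\cE+I_p\in Z^+/I_p=Z_p^+\subseteq\ol{Z_p^+}$, which gives $[x+\tl I_p,x+\tl I_p]_{\cE_p}\ge 0$ in $Z_p$. The definiteness half is exactly where the construction was designed to work: $[x+\tl I_p,x+\tl I_p]_{\cE_p}=0$ in $Z_p$ means $[x,x]_\cE\in I_p$, i.e. $x\in\tl I_p$ by the very definition of $\tl I_p$, i.e. $x+\tl I_p=0$ in $\cE_p$. Thus $(\cE_p,[\cdot,\cdot]_{\cE_p})$ is a VE-space over $Z_p$. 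Finally I would identify its natural topology with the one given by the norm $\Vert\cdot\Vert_p$ on $Z_p$: applying the construction \eqref{e:qujeh} to the single increasing norm $\Vert\cdot\Vert_p$ of $Z_p$ yields the seminorm $x+\tl I_p\mapsto\Vert[x+\tl I_p,x+\tl I_p]_{\cE_p}\Vert_p^{1/2}=p([x,x]_\cE)^{1/2}=\tl p(x)$, which is exactly $\Vert x+\tl I_p\Vert$; hence $\cE_p$ is a normed VE-space over the normed ordered $*$-space $Z_p$, as claimed.

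The only genuinely delicate point is the well-definedness of the gramian, and more specifically the implication $\tl p(x)=0\Rightarrow p([x,y]_\cE)=0$; everything else is bookkeeping that transports known structure through the two quotients. I expect the cleanest write-up to isolate that implication first (via Lemma \ref{l:schwarz}), then list the algebraic verifications, then conclude with the identification of the norm.
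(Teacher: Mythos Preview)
Your proposal is correct and follows essentially the same route as the paper: define the quotient gramian $[x+\tl I_p,y+\tl I_p]:=[x,y]+I_p$, use Lemma~\ref{l:schwarz} for well-definedness, and then identify the induced norm with $\tl p$. The only cosmetic difference is that you obtain the subspace property of $\tl I_p$ directly from the fact that $\tl p$ is a seminorm (as recorded after \eqref{e:qujeh}), whereas the paper re-derives closure under addition by an explicit Schwarz-inequality computation; your shortcut is legitimate and slightly cleaner.
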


\begin{proof}
Let $p\in S_*(Z)$. For any $x\in\tl{I_p}$ and $\alpha\in\CC$, it is immediate to see that $\alpha x \in \tl{I_p}$. Now letting $x_1, x_2 \in \tl{I_p}$ we have
\begin{align*}
\tl{p}^2(x_1+x_2) &= p([x_1+x_2, x_1+x_2]) \\
&\leq p([x_1,x_1]) + 2p([x_1,x_2]) + p([x_2,x_2]) \\
&\leq \tl{p}^2(x_1) + 8\tl{p}(x_1)\tl{p}(x_2) + \tl{p}^2(x_2) = 0
\end{align*}
where for the second inequality we used the Schwarz type inequality in Lemma \ref{l:schwarz}. This completes the proof that $\tl{I_p}$ is a vector subspace of $\cE$. It is a routine check to see that $\tl{I_p}$ is closed. 

We form the quotient normed space $\cE_p:=\cE/\tl{I_p}$ with quotient norm $\Vert x+\tl{I_p} \Vert_p = \tl{p}(x)$ for any $x\in \cE$ and show that it is a normed VE-space over $Z_p$. Define the pairing $[\cdot,\cdot]_p:\cE_p \times \cE_p \ra Z_p$ by 
\begin{equation*}
[x+\tl{I_p}, y+\tl{I_p}]_p:= [x,y] + I_p
\end{equation*}
for any $x,y \in \cE$. To see that this is well defined, let $x_1, x_2, y_1, y_2 \in \cE$ such that $x_1+\tl{I_p} = x_2+\tl{I_p}$ and $y_1+\tl{I_p} = y_2+\tl{I_p}$, equivalently, $\tl{p}(x_1-x_2) = 0$ and $\tl{p}(y_1-y_2) = 0$. Observe that we have 
\begin{align*}
p([x_1,y_1] - [x_2,y_2]) &= p([x_1,y_1]+[x_1,y_2]-[x_1,y_2]-[x_2,y_2]) \\
&\leq p([x_1,y_1-y_2]) + p([x_1-x_2, y_2]) \\
&\leq 4\tl{p}(x_1)\tl{p}(y_1-y_2) + 4\tl{p}(x_1-x_2)\tl{p}(y_2) = 0 
\end{align*}
where for the second inequality we used Lemma \ref{l:schwarz} again. Then we have
\begin{align*}
[x_1+\tl{I_p}, y_1+\tl{I_p}]_p &= [x_1+y_1]+I_p \\
&=[x_2,y_2]+I_p \\
&=[x_2+\tl{I_p}, y_2+\tl{I_p}]_p
\end{align*}
so the pairing $[\cdot,\cdot]_p$ is well defined. It is straightforward to check that $[\cdot,\cdot]_p$ defines a gramian on $\cE_p$. Finally, the natural normed topology of $\cE_p$ as in Subsection \ref{ss:vhs}, given by
\begin{align*}
\Vert x+ \tl{I_p}\Vert &:= \Vert [x+\tl{I_p},x+\tl{I_p}]_p \Vert_p^{1/2}\\
&=\Vert [x,x]+I_p] \Vert^{\frac{1}{2}}=p([x,x])^{1/2}=\tl{p}(x)
\end{align*}
for any $x\in \cE$, agrees with the quotient norm $\Vert x+\tl{I_p} \Vert_p = \tl{p}(x)$ and therefore the quotient norm turns $\cE_p$ into a normed VE-space. That completes the proof.
\end{proof}

\begin{corollary}\label{c:barvequot}
Let $\cE$ be a barreled topological VE-space over the topologically ordered $*$-space $Z$ and let $p\in S_*(Z)$. Then $\cE_p$ is a barreled normed VE-space over $Z_p$. 
\end{corollary}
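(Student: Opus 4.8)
The plan is to recognise $\cE_p$ as a topological quotient of $\cE$ and then apply the standard permanence property that a Hausdorff quotient of a barreled locally convex space by a closed subspace is again barreled (see e.g.\ Chapter~11 of \cite{Jarchow} or \cite{Narici}). Lemma~\ref{l:vequot} already supplies the ingredients: $\tl{I_p}$ is a closed subspace of $\cE$, the vector space $\cE_p=\cE/\tl{I_p}$ equipped with $\Vert x+\tl{I_p}\Vert_p=\tl p(x)$ is a normed VE-space over $Z_p$, and the canonical projection $\pi\colon\cE\ra\cE_p$, $\pi(x)=x+\tl{I_p}$, is linear, surjective and continuous, since $\Vert\pi(x)\Vert_p=\tl p(x)$ and $\tl p$ is a continuous seminorm on $\cE$.

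The one genuinely nontrivial point is to show that the norm topology of $\cE_p$ coincides with the locally convex quotient topology that $\pi$ transports from $\cE$; equivalently, that $\pi$ is an open map. One inclusion is free: $\pi^{-1}\bigl(\{\eta\in\cE_p\mid\Vert\eta\Vert_p<\varepsilon\}\bigr)=\{x\in\cE\mid\tl p(x)<\varepsilon\}$ is open in $\cE$, so the norm topology is coarser than the quotient topology. For the reverse inclusion one must verify that for every $q\in S_*(Z)$ the quotient seminorm $\pi(x)\mapsto\inf_{h\in\tl{I_p}}\tl q(x+h)$ on $\cE_p$ is dominated by a multiple of $\Vert\cdot\Vert_p$. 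This is immediate whenever $q\leq C\,p$ on $Z$ for some constant $C$, because then $\tl q(x+h)\leq C\,\tl p(x+h)=C\,\tl p(x)$ for every $h\in\tl{I_p}$, where one uses that $\tl p$ is constant on cosets of $\tl{I_p}$ (which follows from the Schwarz inequality of Lemma~\ref{l:schwarz}). Granting the equality of the two topologies, $\cE_p$ is the Hausdorff quotient of the barreled space $\cE$ by the closed subspace $\tl{I_p}$, hence barreled, and together with Lemma~\ref{l:vequot} this is exactly the assertion.

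I expect the main obstacle to be precisely the general case of the openness step: bounding $\inf_{h\in\tl{I_p}}\tl q(x+h)$ by $\tl p(x)$ when $q$ is not comparable to $p$ genuinely requires barreledness of $\cE$ rather than merely the VE-space structure, and the obvious choices of representative (such as $h=0$) are useless. The equivalent, and perhaps cleaner-looking, direct route — taking a barrel $B\subseteq\cE_p$, observing that $\pi^{-1}(B)$ is a barrel in $\cE$ hence a $0$-neighbourhood, and pushing forward to get $B\supseteq\pi(V)$ for some $0$-neighbourhood $V$ of $\cE$ — runs into the same wall, since concluding that $\pi(V)$ contains a $\Vert\cdot\Vert_p$-ball again amounts to controlling the quotient seminorms by $\Vert\cdot\Vert_p$. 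So the technical core is the statement that, when $\cE$ is barreled, every continuous seminorm on $\cE$ vanishing on $\tl{I_p}$ is automatically dominated by a multiple of $\tl p$ (equivalently, every continuous linear functional on $\cE$ vanishing on $\tl{I_p}$ is $\tl p$-bounded); this is where the real work should go.
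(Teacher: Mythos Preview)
Your approach coincides with the paper's: identify $\cE_p$ as the quotient of $\cE$ by the closed subspace $\tl{I_p}$ and then invoke the permanence of barreledness under Hausdorff quotients. The paper's proof is two lines --- it asserts that, by Lemma~\ref{l:vequot}, $\cE_p$ ``has the quotient topology'' and cites Jarchow (11.3, Proposition~1(a)).

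The point you isolate --- whether the norm $\Vert\cdot\Vert_p=\tl p$ really generates the full locally convex quotient topology transported from $\cE$ --- is exactly what the paper takes for granted. Lemma~\ref{l:vequot} only shows that $\Vert\cdot\Vert_p$ is the quotient of the \emph{single} seminorm $\tl p$ by its own kernel and that it agrees with the VE-norm coming from the $Z_p$-gramian; it says nothing about the remaining quotient seminorms $\pi(x)\mapsto\inf_{h\in\tl{I_p}}\tl q(x+h)$ for general $q\in S_*(Z)$. So the paper does not close the gap you describe --- it simply moves past it. Your instinct that this is where the difficulty lies is well founded: a continuous norm on a barreled space need not itself give a barreled topology (e.g.\ $(C[0,1],\Vert\cdot\Vert_\infty)$ is Banach, hence barreled, while $(C[0,1],\Vert\cdot\Vert_2)$ is not, and this situation is realised in the present framework by taking $Z=C[0,1]$ with the sup-norm, $\cE=C[0,1]$ with $[f,g]=\bar f g$, and $p=\Vert\cdot\Vert_1\in S_*(Z)$, so that $\tl{I_p}=\{0\}$ and $\cE_p=(C[0,1],\Vert\cdot\Vert_2)$). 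The equality of the two topologies therefore cannot be had for free, and neither your sketch nor the paper supplies the missing argument.
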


\begin{proof}
Since $\tl{I_p}$ is a closed subspace of $\cE$ and $\cE_p$ has the quotient topology by Lemma \ref{l:vequot}, by e.g. Proposition 1(a) in 11.3 of \cite{Jarchow}, the normed topology of $\cE_p$ is barreled as well. Therefore $\cE_p$ is a barreled normed VE-space over $Z_p$.
\end{proof}

\begin{corollary}\label{c:oprquot}
Let $\cE$ and $\cF$ be two topological VE-spaces over the same topologically ordered $*$-space $Z$. Let $p\in S_*(Z)$ and $T\in \cL^*(\cE,\cF)$. Then the naturally defined quotient operator $T_p\colon \cE_p \ra \cF_p$ is a well defined linear operator and $T_p\in \cL^*(\cE_p,\cF_p)$. 
\end{corollary}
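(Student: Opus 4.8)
The plan is to define $T_p$ in the obvious way and then check it is well-defined, linear, and adjointable. For $x \in \cE$ set $T_p(x + \tl I_p^{\cE}) := Tx + \tl I_p^{\cF}$; here I write $\tl I_p^{\cE}$ and $\tl I_p^{\cF}$ for the spaces $\{e : \tl p(e) = 0\}$ in $\cE$ and $\cF$ respectively, as introduced before Lemma~\ref{l:vequot}. First I would verify that this is well-defined, i.e. that $x \in \tl I_p^{\cE}$ forces $Tx \in \tl I_p^{\cF}$. The natural way to see this is via the adjoint: for any $y \in \cF$ the Schwarz inequality of Lemma~\ref{l:schwarz} gives $p([Tx,y]_{\cF}) = p([x,T^*y]_{\cE}) \leq 4\tl p(x)\tl p(T^*y) = 0$, so $[Tx,y]_{\cF} \in I_p$ for all $y$; taking $y = Tx$ yields $[Tx,Tx]_{\cF} \in I_p$, that is $\tl p(Tx) = 0$. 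Linearity of $T_p$ is then immediate from linearity of $T$ and of the quotient map.

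Next I would produce the adjoint of $T_p$. The obvious candidate is $(T^*)_p$, the quotient operator of $T^*$, which is well-defined by the same argument applied to $T^*$ (note $T^* \in \cL^*(\cF,\cE)$ with $(T^*)^* = T$). I would then check the adjoint identity in $Z_p$: for $x \in \cE$, $y \in \cF$,
\begin{equation*}
[T_p(x+\tl I_p^{\cE}), y+\tl I_p^{\cF}]_p = [Tx,y]_{\cF} + I_p = [x,T^*y]_{\cE} + I_p = [x+\tl I_p^{\cE}, (T^*)_p(y+\tl I_p^{\cF})]_p,
\end{equation*}
where the middle equality is just the defining relation \eqref{e:adj} for $T^*$, pushed down to the quotient using the definition of $[\cdot,\cdot]_p$ from Lemma~\ref{l:vequot}. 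This shows $T_p$ is adjointable with $(T_p)^* = (T^*)_p$, hence $T_p \in \cL^*(\cE_p,\cF_p)$.

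I do not expect any serious obstacle here; the statement is essentially bookkeeping on the quotient construction already set up in Lemmas~\ref{l:tosquot} and~\ref{l:vequot}. The only point requiring a little care is the well-definedness step, and there the mild subtlety is that one should pass through the adjoint (or equivalently through $\tl p(Tx) = 0 \iff [Tx,y] \in I_p$ for all $y$) rather than hoping for a direct norm estimate on $T$, since $T$ is not assumed bounded at this stage — indeed establishing boundedness is the content of the subsequent Theorem~\ref{t:adopbar}, for which this corollary is preparatory. Everything else — linearity, the gramian computation, and the verification that $(T^*)_p$ lands in $\cL(\cF_p,\cE_p)$ — is routine and follows directly from the corresponding properties of $T$ and $T^*$ together with the already-checked fact that $[\cdot,\cdot]_p$ is a well-defined gramian.
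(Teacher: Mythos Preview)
Your proposal is correct and follows essentially the same approach as the paper: define $T_p$ on cosets, verify well-definedness via adjointability and a Schwarz inequality, and then check that $(T^*)_p$ serves as the adjoint by the same gramian computation. The only minor difference is in the well-definedness step: the paper writes $p([Tx,Tx])=p([T^*Tx,x])$ and applies Lemma~\ref{l:schwarzforposop} to the positive operator $T^*T$, whereas you move the adjoint to the other slot and use Lemma~\ref{l:schwarz}; both work equally well here.
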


\begin{proof}
Define $T_p\colon \cE_p \ra \cF_p$ by $T_p(x+\tl{I^{\cE}_{p}}):= Tx + \tl{I^{\cF}_{p}}$. In order to see that $T_p$ is well defined, it is enough to show that, if $x+\tl{I^{\cE}_p}=0$, then $Tx + \tl{I^{\cF}_p} = 0$; equivalently, $\tl{p}_{\cE}(x) = 0$ implies $\tl{p}_{\cF}(Tx)=0$. We have, for such $x\in \cE$, 
\begin{align*}
\tl{p}_{\cF}(Tx) &= p([Tx,Tx])^{\frac{1}{2}} = p([T^*Tx,x])^{\frac{1}{2}} \\
&\leq \tl{p}_{\cE}(T^*Tx)\tl{p}_{\cE}(x) = 0
\end{align*}
where for the inequality we used Lemma \ref{l:schwarzforposop}. 

It is easy to see that $T_p$ is linear. To see that it is adjointable with adjoint $(T^*)_p$, let $x\in \cE$ and $y\in \cF$ be arbitrary. Then
\begin{align*}
&[T_p(x+\tl{I^{\cE}_p}), y+\tl{I^{\cF}_p}]_{\cF_p} = [Tx+\tl{I^{\cF}_p}, y+\tl{I^{\cF}_p}]_{\cF_p} \\
=& [Tx,y]+I_p = [x,T^*y]+I_p \\
=&[x+\tl{I^{\cE}_p}, T^*y+\tl{I^{\cE}_p}]_{\cE_p} = [x+\tl{I^{\cE}_p}, (T^*)_p(y+\tl{I^{\cF}_p})]_{\cE_p}
\end{align*}
and it is shown that $T_p$ is adjointable with $T_p^* = (T^*)_p$. 
\end{proof}

The following theorem establishes boundedness of an adjointable operator on a barreled topological VE-space. To some extent, we use an idea going back to \cite{Paschke}, see the argument in page 8 of \cite{Lance} as well. 

\begin{theorem}\label{t:adopbar}
Let $\cE$ be a barreled topological VE-space and $\cF$ be a topological VE-space over the same 
topologically ordered $*$-space $Z$. Then $\cL^{*}(\cE,\cF)=\cL_{b}^{*}(\cE,\cF)$. 
\end{theorem}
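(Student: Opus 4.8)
The plan is to reduce the problem to the normed/Banach setting, where the classical automatic-boundedness argument for adjointable operators on (pre-)Hilbert $C^*$-modules applies, and then lift the conclusion back up through the quotients. First I would fix an arbitrary $p\in S_*(Z)$ and pass to the quotients $\cE_p=\cE/\tl I_p$ and $\cF_p=\cF/\tl I_p$, which by Lemma~\ref{l:vequot} are normed VE-spaces over the normed ordered $*$-space $Z_p$, and which carry the quotient seminorm $\tl p$. By Corollary~\ref{c:barvequot}, $\cE_p$ is a \emph{barreled} normed VE-space, i.e.\ a barreled normed space in the ordinary sense. By Corollary~\ref{c:oprquot}, the operator $T\in\cL^*(\cE,\cF)$ descends to $T_p\in\cL^*(\cE_p,\cF_p)$ with adjoint $(T^*)_p$. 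So it suffices to prove: an adjointable operator between normed VE-spaces, with barreled domain, is bounded in the order-norm sense, with a norm bound independent of $p$ that is then reassembled to give $T\in\cL_b^*(\cE,\cF)$.

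For the normed statement, I would run the Paschke-type argument referenced after the theorem. The point is to produce, for each unit vector $f$ in the unit ball of $\cF_p$, a linear functional on $\cE_p$ out of the gramian, and to use barreledness (hence the uniform boundedness principle, which holds in barreled spaces) to get a single bound. Concretely: for $e\in\cE_p$ and $f\in\cF_p$ with $\tl p(f)\le 1$, the scalar-valued map $e\mapsto \Vert[T_pe,f]_{\cF_p}\Vert_p$ is controlled, via the Schwarz inequality of Lemma~\ref{l:schwarz} (applied in $\cF_p$) by $4\tl p(T_pe)$, and via adjointability $[T_pe,f]=[e,T_p^*f]$ together with Lemma~\ref{l:schwarz} in $\cE_p$ by $4\tl p(e)\tl p(T_p^*f)\le 4\tl p(e)\Vert T_p^*\Vert_p$ once we know $T_p^*$ is bounded — so the real work is a simultaneous, symmetric argument on $T_p$ and $T_p^*$. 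The standard device: consider the family of maps $\{e\mapsto [T_p^*(T_p e),e]\}$ or rather the functionals $g_f:=[\cdot,T_p^*f]$ indexed by $f$ in the unit ball; each $g_f$ is continuous on $\cE_p$ (being given by the continuous gramian composed with a fixed vector), the family is pointwise bounded because for fixed $e$, $\Vert[e,T_p^*f]\Vert_p=\Vert[T_pe,f]\Vert_p\le 4\tl p(T_pe)\tl p(f)\le 4\tl p(T_pe)$ is independent of $f$, and therefore by the uniform boundedness principle on the barreled space $\cE_p$ the functionals are equicontinuous: there is $C=C_p$ with $\Vert[e,T_p^*f]\Vert_p\le C\,\tl p(e)$ for all $e$ and all $f$ in the unit ball. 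Taking $f$ in the direction of $T_pe$ (using the Schwarz-type lemma for positive operators, Lemma~\ref{l:schwarzforposop}, applied to $T_p^*T_p$, or a direct polarisation estimate) then yields $\tl p(T_pe)\le C'\tl p(e)$, i.e.\ $T_p$ is $\tl p$-bounded with constant $\ol p(T_p)\le C'$.

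Having bounded each $T_p$, I would translate back: for every $p\in S_*(Z)$ and every $e\in\cE$ we get $\tl p(Te)=\Vert T_p(e+\tl I_p)\Vert\le C'_p\,\Vert e+\tl I_p\Vert=C'_p\,\tl p(e)$, which is exactly the definition of $T$ being $S_*(Z)$-bounded, hence $T\in\cL_b(\cE,\cF)$; combined with $T\in\cL^*(\cE,\cF)$ this gives $T\in\cL_b^*(\cE,\cF)$, and the reverse inclusion $\cL_b^*(\cE,\cF)\subseteq\cL^*(\cE,\cF)$ is trivial. (Boundedness of $T^*$ in the same sense is automatic by Lemma~\ref{l:autobd}, or one observes the whole argument is symmetric in $T$ and $T^*$ since $(T^*)_p=(T_p)^*$.)

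The main obstacle I anticipate is the first, ``normed'' step — specifically, making the uniform-boundedness argument genuinely work with the \emph{order} inequality $[Te,Te]_\cF\le C^2[e,e]_\cE$ rather than merely with the scalar seminorm inequality $\tl p(Te)\le C\tl p(e)$. In a normed VE-space these two are equivalent (this is the remark in the paper that $S_0(Z)$-boundedness becomes ordinary boundedness in the normed case), so once $T_p$ is $\tl p$-bounded it is order-bounded as an operator between normed VE-spaces over $Z_p$; but care is needed because the Schwarz inequality of Lemma~\ref{l:schwarz} carries a factor $4$, so the back-and-forth between $\tl p(T_pe)$ and $\tl p(T_p^*f)$ must be arranged so the constants close up (this is why I would route the final estimate through the sharper Lemma~\ref{l:schwarzforposop} applied to the positive operator $T_p^*T_p$, which has no spurious constant). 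A secondary, lesser obstacle is purely bookkeeping: checking that $\cE_p$ is barreled when $\cE$ is — but that is already isolated as Corollary~\ref{c:barvequot}, so it can be invoked directly.
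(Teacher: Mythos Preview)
Your proposal is correct and follows essentially the same route as the paper: fix $p\in S_*(Z)$, pass to the quotients $\cE_p,\cF_p$ via Lemma~\ref{l:vequot} and Corollaries~\ref{c:barvequot}, \ref{c:oprquot}, form the $Z_p$-valued family $g_f=[\,\cdot\,,T_p^*f]$ indexed by $f$ in the unit ball of $\cF_p$, verify individual continuity and pointwise boundedness via Lemma~\ref{l:schwarz} and the adjoint relation, apply the uniform boundedness principle on the barreled space $\cE_p$, and finish by substituting $f=T_pe$. The paper carries out exactly this argument; the only cosmetic difference is that the final extraction of $\tl p(Te)\le C\,\tl p(e)$ is done by the direct substitution $y=Tx$ in $p([y,Tx])\le C\,\tl p(y)\tl p(x)$ rather than routing through Lemma~\ref{l:schwarzforposop}, and your anticipated ``main obstacle'' about order versus seminorm boundedness is not an issue here since $\cL_b^*(\cE,\cF)$ is defined in this paper via the seminorm inequalities $\tl p(Te)\le C_p\,\tl p(e)$, not via an order inequality.
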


\begin{proof}
Let $T\in\cL^{*}(\cE,\cF)$ be an operator, and $p\in S_*(Z)$. Using Lemma \ref{l:vequot} and Corollaries \ref{c:barvequot} and \ref{c:oprquot} we obtain the barreled normed VE-space $\cE_p$, the normed VE-space $\cF_p$, both over $Z_p$ and the quotient operator $T_p\colon \cE_p \ra \cF_p$. Now for any $y\in \cF$ with $\tl{p}(y)\leq 1$ define the vector valued linear functional $f_y\colon \cE_p \ra \cF_p$ by $f_y(x+\tl{I}_p) = [T^*_p(y+\tl{I}_p), x+\tl{I}_p]_{\cE_p}$. 

We show that for $y$ fixed, $f_y$ is bounded. We have 
\begin{align*}
\Vert f_y(x+\tl{I}_p) \Vert_{Z_p} &= \Vert [T^*_p(y+\tl{I}_p), x+\tl{I}_p]_{\cE_p} \Vert_{Z_p} \\
&=\Vert [T^*y, x]_{\cE} + I_p \Vert_{Z_p} \\
&= p([T^*y,x]) \\
&\leq 4\tl{p}(T^*y)\tl{p}(x) = 4\tl{p}(T^*y)\Vert (x+\tl{I}_p) \Vert_{\cE_p}
\end{align*}
for all $x\in\cE$, where for the inequality we used Lemma \ref{l:schwarz} and it follows that $f_y$ is bounded with $\Vert f_y \Vert\leq 4\tl{p}(T^*y)$. In addition we show that the family $\{f_y\}$ is pointwise bounded. For any fixed $x\in\cE$ we have
\begin{align*}
\Vert f_y(x+\tl{I}_p) \Vert_{Z_p} &= \Vert [T^*_p(y+\tl{I}_p), x+\tl{I}_p]_{\cE_p} \Vert_{Z_p} \\
&=\Vert [T^*y, x]_{\cE} + I_p \Vert_{Z_p} \\
&= p([T^*y,x]) = p([y,Tx])\\
&\leq 4\tl{p}(y)\tl{p}(Tx) \leq 4\tl{p}(Tx)
\end{align*}
where we used Lemma \ref{l:schwarz} again for the first inequality. Hence the family $\{f_y\}$ is pointwise bounded with constant $C_x=4\tl{p}(Tx)$ for any fixed $x\in\cE$. 

It follows by the Principle of Uniform Boundedness 
applied to the family $\{f_y\}$ that, there exists a constant $C >0$ such that $\Vert f_y(x+\tl{I}_p) \Vert_{Z_p} \leq C \Vert x+\tl{I}_p \Vert_{\cE_p}$ for all $x\in\cE$ and $y\in\cF$ with $\tl{p}(y)\leq 1$. Equivalently, we obtain $p([y, Tx]) \leq C \tl{p}(x) = Cp([x,x])^{1/2}$ for all $x\in\cE$ and $y\in\cF$ with $\tl{p}(y)\leq 1$.

Now let $y\in\cF$ be any element, and let $\epsilon > 0$. By applying the inequality above with the element $(\tl{p}(y) + \epsilon)^{-1} y$ in place of $y$ we obtain 
\begin{equation*}
p([y,Tx]) \leq C (\tl{p}(y) + \epsilon) \tl{p}(x)
\end{equation*}
for all $x\in\cE$ and $y\in\cF$. Letting $\epsilon \ra 0$, it follows that 
\begin{equation*}
p([y,Tx]) \leq C \tl{p}(y) \tl{p}(x)
\end{equation*}
for all $x\in\cE$ and $y\in\cF$. Finally, choosing $y:= Tx$ in the last inequality and a standard argument implies that $\tl{p}(Tx) \leq C \tl{p}(x)$ for all $x\in\cE$. It follows that $T$ is bounded and the proof is finished.
\end{proof}

Notice that, in the preceding proof, if the topological VE-space $\cE$ is 
such that the quotient normed spaces $\cE_p$ are barreled, then the proof still works 
even without the barreled assumption on $\cE$. 
Consequently, we obtain the following corollary:

\begin{corollary}\label{c:quotbar}
Let $\cE$ be a topological VE-space over the topologically ordered $*$-space $Z$. Assume that the quotient normed VE-spaces $\cE_p$ 
are barreled for all $p\in S_0(Z)$, where $S_0(Z)\subseteq S_*(Z)$ is a family defining 
the topology of $Z$. Let $\cF$ be a topological VE-space over the same 
topologically ordered $*$-space $Z$. Then $\cL^{*}(\cE,\cF)=\cL_{b}^{*}(\cE,\cF,S_0(Z))$.
\end{corollary}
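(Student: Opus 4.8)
The plan is to mimic almost verbatim the proof of Theorem \ref{t:adopbar}, since Corollary \ref{c:quotbar} is essentially the same statement with the single global hypothesis ``$\cE$ barreled'' replaced by the weaker hypothesis ``$\cE_p$ barreled for every $p\in S_0(Z)$''. Indeed, the only place in the proof of Theorem \ref{t:adopbar} where barreledness of $\cE$ is used is the invocation of Corollary \ref{c:barvequot} to conclude that $\cE_p$ is a barreled normed VE-space; once that is in hand, the argument applies the Principle of Uniform Boundedness directly in the normed quotient space $\cE_p$. So first I would observe that for $p\in S_0(Z)$ the quotient $\cE_p=\cE/\tl{I_p}$ is, by Lemma \ref{l:vequot}, a normed VE-space over $Z_p$, and by the standing hypothesis it is barreled; likewise $\cF_p$ is a normed VE-space over $Z_p$ by Lemma \ref{l:vequot}, and by Corollary \ref{c:oprquot} the quotient operator $T_p\colon\cE_p\ra\cF_p$ is well defined, linear and adjointable with $T_p^*=(T^*)_p$.

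Next I would reproduce the functional-analytic core of the proof of Theorem \ref{t:adopbar}: for each $y\in\cF$ with $\tl{p}(y)\le 1$, define $f_y\colon\cE_p\ra Z_p$ by $f_y(x+\tl{I}_p)=[T_p^*(y+\tl{I}_p),x+\tl{I}_p]_{\cE_p}$; show via Lemma \ref{l:schwarz} that each $f_y$ is a bounded linear map with $\Vert f_y\Vert\le 4\tl{p}(T^*y)$, and that the family $\{f_y\}$ is pointwise bounded, with bound $4\tl{p}(Tx)$ at $x$, using $[T^*y,x]=[y,Tx]$. Since $\cE_p$ is barreled, the Principle of Uniform Boundedness yields a constant $C>0$ with $p([y,Tx])\le C\tl{p}(x)$ for all $x\in\cE$ and all $y\in\cF$ with $\tl{p}(y)\le 1$. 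A homogeneity/rescaling argument (replace $y$ by $(\tl{p}(y)+\epsilon)^{-1}y$ and let $\epsilon\to 0$) upgrades this to $p([y,Tx])\le C\tl{p}(y)\tl{p}(x)$ for all $x,y$, and setting $y=Tx$ gives $\tl{p}(Tx)\le C\tl{p}(x)$. This holds for every $p\in S_0(Z)$, so $T\in\cL_b^*(\cE,\cF,S_0(Z))$; the reverse inclusion $\cL_b^*(\cE,\cF,S_0(Z))\subseteq\cL^*(\cE,\cF)$ is immediate from the definition.

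Honestly, there is essentially no obstacle here: the corollary is precisely the observation, already flagged in the paragraph following the proof of Theorem \ref{t:adopbar}, that the barreledness of $\cE$ was used only to pass barreledness to the quotients $\cE_p$, so if one assumes the latter directly one may drop the former. The only mild point of care is bookkeeping about the indexing family: one must make sure that the $p$ ranging over $S_0(Z)$ in the hypothesis is the same family used to define $S_0(Z)$-boundedness in the conclusion, and that $S_0(Z)$ indeed generates the topology of $Z$ so that $\tl{p}$-boundedness for all $p\in S_0(Z)$ genuinely places $T$ in $\cL_b^*(\cE,\cF,S_0(Z))$ rather than some ostensibly larger space. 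Given Remarks \ref{r:add} and \ref{r:sasn} and the discussion preceding Lemma \ref{l:autobd}, this is routine. Therefore the proof can be given in one or two sentences pointing to the proof of Theorem \ref{t:adopbar} with the indicated substitution, or spelled out by copying that proof with ``$\cE$ barreled $\Rightarrow\cE_p$ barreled'' replaced by the hypothesis ``$\cE_p$ barreled''.
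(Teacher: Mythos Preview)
Your proposal is correct and is exactly the paper's approach: the paper does not give a separate proof of Corollary~\ref{c:quotbar} but simply observes, in the paragraph immediately following the proof of Theorem~\ref{t:adopbar}, that the only use of barreledness of $\cE$ in that proof was to ensure barreledness of the quotients $\cE_p$, so assuming the latter directly suffices. Your bookkeeping point about restricting to $p\in S_0(Z)$ (rather than all of $S_*(Z)$) so as to land in $\cL_b^*(\cE,\cF,S_0(Z))$ is the one small adjustment needed, and you have handled it correctly.
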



\begin{remark}\label{r:opcleq}
As a consequence of Theorem \ref{t:adopbar}, with $\cE$ and $\cF$ as in the theorem, we have $\cL_{b}^{*}(\cE,\cF) = \cL^*_{\mathrm{c}}(\cE,\cF) = \cL^*(\cE,\cF)$. In particular, any adjointable operator is bounded and hence continuous, with a bounded and hence continuous adjoint. 
\end{remark}

Let $\cH$ be a VH-space over a normed admissible space $Z$. Then $\cH$ is in particular a Banach space, see subsection \ref{ss:vhs}, consequently it is a barreled VH-space. Hence the following corollary of Theorem \ref{t:adopbar} follows:

\begin{corollary}
Let $\cH$ be a normed VH-space and $\cK$ be a normed VE-space over the same normed admissible space $Z$. Then $\cL_{b}^{*}(\cH,\cK)=\cL^{*}(\cH,\cK)$.
\end{corollary}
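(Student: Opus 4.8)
The plan is to read this off Theorem~\ref{t:adopbar}, the only thing that needs to be verified being that a normed VH-space is a barreled topological VE-space. First I would recall, from Subsection~\ref{ss:vhs}, that when $Z$ is a normed admissible space its topology is given by a single increasing norm $p$, and then the topology of the normed VH-space $\cH$ is generated by the single norm $\tl{p}(h)=p([h,h])^{1/2}$ of \eqref{e:qujeh}. Since the definition of a VH-space requires $\cH$ to be complete for this topology, $(\cH,\tl{p})$ is a Banach space.

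Next I would invoke the standard fact that every Banach space is barreled (see Chapter~11 of \cite{Jarchow} or \cite{Narici}); hence $\cH$ is a barreled topological VE-space over $Z$ in the sense of Subsection~\ref{ss:debarvh}. Now $\cK$ is a topological VE-space over the same $Z$, so Theorem~\ref{t:adopbar} applies with $\cE=\cH$ and $\cF=\cK$ and yields $\cL^{*}(\cH,\cK)=\cL_{b}^{*}(\cH,\cK)$, which is exactly the assertion.

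There is essentially no obstacle here; the proof is a one-line application of the main theorem once barreledness of $\cH$ is observed. The only point worth a sentence is that, since $Z$ is normed, the notion of an $S_*(Z)$-bounded operator entering the definition of $\cL_{b}^{*}(\cH,\cK)$ specializes to ordinary operator-norm boundedness of $T\colon\cH\ra\cK$, as discussed in Section~\ref{s:cclo}; so the corollary genuinely says that adjointable operators between normed VH-spaces — in particular between Hilbert $C^*$-modules, cf.\ Examples~\ref{e:evhs}(5) — are automatically bounded, thereby recovering the classical fact.
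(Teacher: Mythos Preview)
Your proposal is correct and follows essentially the same approach as the paper: the paragraph preceding the corollary already observes that a VH-space over a normed admissible space is a Banach space and hence barreled, so the statement is an immediate consequence of Theorem~\ref{t:adopbar}. Your additional remark that in the normed setting $S_*(Z)$-boundedness reduces to ordinary operator-norm boundedness is accurate and helpful, though not part of the paper's (one-line) argument.
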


We obtain the following structural theorem for $\cL^{*}(\cE)$ when $\cE$ is a barreled topological VE-space.

\begin{theorem}\label{t:adopbarloc}
Let $\cE$ be a barreled topological VE-space over a topologically ordered $*$-space $Z$. Then $\cL^{*}(\cE)$ is a pre-locally $C^*$-algebra. In addition, if $\cE$ is a barreled VH-space over an admissible space $Z$, then $\cL^{*}(\cE)$ is a locally $C^*$-algebra. 
\end{theorem}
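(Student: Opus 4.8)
The plan is to reduce the statement to facts already established for the space $\cL_b^*(\cE)$ of bounded adjointable operators. Since $\cE$ is barreled, Theorem \ref{t:adopbar} applied with $\cF=\cE$ gives the identification $\cL^*(\cE)=\cL_b^*(\cE,\cE)=\cL_b^*(\cE)$. First I would make explicit that this is not merely an equality of vector spaces but also of $*$-algebras, and that it transports onto $\cL^*(\cE)$ the locally convex $*$-topology of $\cL_b^*(\cE)$, namely the one generated by the $*$-seminorms $\{\ol p\mid p\in S_*(Z)\}$ (recall that $\cL_b^*(\cE)=\cL_b^*(\cE,\cE,S_*(Z))$), which is Hausdorff by Lemma \ref{l:autobd}.

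For the first assertion, I would then invoke Proposition \ref{p:bddadjpreloc} with the generating family $S_0(Z)=S_*(Z)$: it states precisely that $\cL_b^*(\cE,S_*(Z))$ is a pre-locally $C^*$-algebra, i.e.\ a $*$-algebra on which the seminorms $\ol p$ are submultiplicative and satisfy the $C^*$-identity $\ol p(T^*T)=\ol p(T)^2$, with Hausdorff topology. Reading this structure back through the identification above yields that $\cL^*(\cE)$ is a pre-locally $C^*$-algebra.

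For the second assertion, assume in addition that $\cE$ is a VH-space over an admissible space $Z$. Then Corollary \ref{c:bddadjloc} (which combines Proposition \ref{p:bddadjpreloc} with the completeness statement Proposition \ref{p:bddadjcomp}, the latter using that $\cF=\cE$ and $Z$ are complete) asserts that $\cL_b^*(\cE)$ is in fact a locally $C^*$-algebra, i.e.\ a complete pre-locally $C^*$-algebra. Combining this with $\cL^*(\cE)=\cL_b^*(\cE)$ from Theorem \ref{t:adopbar} finishes the proof.

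There is essentially no hard step; the only point needing care is the first one, namely checking that Theorem \ref{t:adopbar} matches the algebraic and (via the seminorms $\ol p$) topological structures, so that the pre-locally $C^*$-algebra, respectively locally $C^*$-algebra, structure of $\cL_b^*(\cE)$ may legitimately be carried over to $\cL^*(\cE)$.
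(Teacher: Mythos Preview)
Your proposal is correct and follows essentially the same approach as the paper: the paper's proof simply says to combine Proposition~\ref{p:bddadjpreloc} with Theorem~\ref{t:adopbar} for the first assertion, and Corollary~\ref{c:bddadjloc} with Theorem~\ref{t:adopbar} for the second. Your additional remark that the identification $\cL^*(\cE)=\cL_b^*(\cE)$ respects the $*$-algebra structure and the seminorm topology is a reasonable clarification, though the paper leaves this implicit.
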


\begin{proof}
For the first statement, combine the statements of Proposition \ref{p:bddadjpreloc} and Theorem \ref{t:adopbar}. For the second, combine the statements of Corollary \ref{c:bddadjloc} and Theorem \ref{t:adopbar}.
\end{proof}

\begin{corollary}\label{c:adopbarhilb}
Let $\cH$ be a barreled VH-space over the admissible space 
$Z$ and $\cK$ be a VH-space over $Z$. Then the space $\cL^*(\cH,\cK)$ is 
a locally Hilbert $C^*$-module over the locally $C^*$-algebra $\cL^*(\cH)$. 
\end{corollary}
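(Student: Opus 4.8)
The plan is to combine the automatic boundedness result with the locally Hilbert $C^*$-module structure that has already been established for bounded adjointable operators, so that the corollary becomes a bookkeeping consequence. First I would invoke Theorem \ref{t:adopbar}: since $\cH$ is a barreled topological VE-space and $\cK$ is a topological VE-space over the same topologically ordered $*$-space $Z$, we have the equality of sets $\cL^*(\cH,\cK)=\cL_{b}^{*}(\cH,\cK)$; applying the same theorem with $\cK$ replaced by $\cH$ (which is itself barreled), we also obtain $\cL^*(\cH)=\cL_{b}^{*}(\cH)$, and by Theorem \ref{t:adopbarloc} the latter is a locally $C^*$-algebra carrying the topology inherited from $\cL_{b}^{*}(\cH)$.

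Next I would appeal to Corollary \ref{c:bddadjhilb}. Its hypotheses are met here because $\cH$ and $\cK$ are both VH-spaces over the admissible space $Z$: taking $S_0(Z)=S_*(Z)$, it gives that $\cL_{b}^{*}(\cH,\cK)$ is a locally Hilbert $C^*$-module over the locally $C^*$-algebra $\cL_{b}^{*}(\cH)$, with right module action $(T,A)\mapsto TA$ and gramian $[T_1,T_2]:=T_1^*T_2$. Transporting this structure along the set equalities from the previous step, $\cL^*(\cH,\cK)$ carries precisely this module action and gramian over $\cL^*(\cH)$.

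The remaining point is to note that the two identifications are compatible with all the relevant operations, and this is immediate: the equalities $\cL^*(\cH)=\cL_{b}^{*}(\cH)$ and $\cL^*(\cH,\cK)=\cL_{b}^{*}(\cH,\cK)$ respect composition and the adjoint (defined in the same way on both sides), so the module action $TA$ and the pairing $T_1^*T_2$ carry over verbatim, and by Theorem \ref{t:adopbarloc} the locally $C^*$-algebra topology on $\cL^*(\cH)$ coincides with the one on $\cL_{b}^{*}(\cH)$. Hence nothing needs to be re-proven. There is essentially no obstacle to overcome; the only mild point of care is ensuring that the seminorm family $S_0(Z)$ used when invoking Corollary \ref{c:bddadjhilb} is taken to be the full collection $S_*(Z)$, so that the $\cL_{b}^{*}$ appearing there matches the unsubscripted $\cL_{b}^{*}$ of Theorem \ref{t:adopbar}, and then simply substituting the set equalities to conclude.
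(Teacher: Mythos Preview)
Your proposal is correct and follows essentially the same approach as the paper, which simply instructs the reader to combine Corollary~\ref{c:bddadjhilb}, Theorem~\ref{t:adopbar}, and Theorem~\ref{t:adopbarloc}. You have merely made explicit the bookkeeping (in particular the choice $S_0(Z)=S_*(Z)$ and the compatibility of the identifications), which the paper leaves implicit.
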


\begin{proof}
Combine the statements of Corollary \ref{c:bddadjhilb}, Theorem \ref{t:adopbar} and 
Corollary \ref{t:adopbarloc}. 
\end{proof}

We obtain the following corollary from Theorem \ref{t:adopbarloc}. 

\begin{corollary}
Let $\cH$ be a normed VH-space over the normed admissible space $Z$. Then $\cL^{*}(\cH)$ is a $C^*$-algebra.
\end{corollary}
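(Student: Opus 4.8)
The plan is to deduce this as a direct consequence of Theorem \ref{t:adopbarloc} together with the elementary observation that a normed VH-space is automatically barreled. First I would note that if $Z$ is a normed admissible space and $\cH$ is a normed VH-space over $Z$, then $\cH$ is by definition complete with respect to the single norm $\tl{p}$ coming from the single increasing norm $p$ on $Z$, so $\cH$ is a Banach space; every Banach space is barreled, hence $\cH$ is a barreled VH-space over the admissible space $Z$. Therefore the hypotheses of the second statement of Theorem \ref{t:adopbarloc} are met, and we conclude that $\cL^*(\cH)$ is a locally $C^*$-algebra.

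The remaining point is to upgrade ``locally $C^*$-algebra'' to ``$C^*$-algebra'' in the normed setting. Here the key is that $S_*(Z)$ can be taken to consist of the single norm $p$ (up to scalar multiples, which do not change the induced locally convex topology and produce the same completeness), so the family of operator seminorms $\{\ol{p}\}$ on $\cL^*(\cH) = \cL_b^*(\cH)$ reduces to a single norm $\ol{p}$. By Proposition \ref{p:bddadjpreloc} applied with $S_0(Z) = \{p\}$, the space $\cL_b^*(\cH)$ is a pre-locally $C^*$-algebra whose topology is given by the single $C^*$-seminorm $\ol{p}$, i.e. it is a pre-$C^*$-algebra; and by Proposition \ref{p:bddadjcomp} (with $\cF = \cH$, using that $\cH$ is a VH-space) it is complete. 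A complete pre-$C^*$-algebra is a $C^*$-algebra, so $\cL^*(\cH) = \cL_b^*(\cH)$ is a $C^*$-algebra.

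I do not anticipate a serious obstacle: the only mild subtlety is making precise that in the normed case the relevant seminorm family really is a singleton so that ``locally convex with $C^*$-seminorms and complete'' collapses to ``$C^*$-algebra'', but this is immediate from the definition of a normed ordered $*$-space and the fact that $\ol{p}$ as defined in Section \ref{s:cclo} is genuinely a norm (not merely a seminorm) when $p$ is a norm, because $\ol{p}(T) = 0$ forces $\tl{p}(Tx) = 0$ for all $x$, hence $Tx = 0$ for all $x$ as $\tl{p}$ is a norm on $\cH$, so $T = 0$. One could even phrase the whole argument as: combine Theorem \ref{t:adopbar} (to get $\cL^*(\cH) = \cL_b^*(\cH)$) with Corollary \ref{c:bddadjloc} in its normed specialization, observing that a locally $C^*$-algebra whose defining family of $C^*$-seminorms is a single norm is exactly a $C^*$-algebra.
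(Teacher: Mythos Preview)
Your proposal is correct and follows essentially the same approach as the paper: the paper simply states that this corollary is obtained from Theorem \ref{t:adopbarloc}, and you have correctly filled in the details---a normed VH-space is Banach hence barreled, so Theorem \ref{t:adopbarloc} gives that $\cL^*(\cH)$ is a locally $C^*$-algebra, and in the normed case the single $C^*$-norm $\ol{p}$ makes this a genuine $C^*$-algebra.
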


The hypothesis of Corollary \ref{c:quotbar} holds when $\cE$ is a 
locally Hilbert $C^*$-module over a locally $C^*$-algebra $\cA$ and the family $S_0(Z)$ is taken as
a generating family of $C^*$-seminorms $S^*(\cA)$ 
of $\cA$. It is known in that case, see \cite{Apostol}, that the quotient spaces $\cA_p$, $p\in S^*(\cA)$ are $C^*$-algebras and corresponding quotient spaces $\cE_p$ are Hilbert $C^*$-modules. In particular $\cE_p$ are complete and hence barreled, e.g. see \cite{Phillips} and \cite{Zhuraev}. 

As a result, we recover the following well known result, see \cite{Phillips} and 
\cite{Zhuraev}, as a special case of Corollary \ref{c:quotbar}:

\begin{corollary}\label{c:quothilb}
Let $\cE$ be a locally Hilbert $C^*$-module over the locally $C^*$-algebra $\cA$ 
and $\cF$ be a pre-locally Hilbert $C^*$-module over $\cA$. Then $\cL^{*}(\cE,\cF)=\cL_{b}^{*}(\cE,\cF,S^*(\cA))$. Moreover,
$\cL^*(\cE)=\cL_{b}^{*}(\cE,S^*(\cA))$ is a locally $C^*$-algebra. If, in addition $\cF$ is also a locally Hilbert $C^*$-module, then $\cL^{*}(\cE,\cF)=\cL_{b}^{*}(\cE,\cF,S^*(\cA))$ is a locally Hilbert $C^*$-module over the locally $C^*$-algebra $\cL^*(\cE,S^*(\cA))$. 
\end{corollary}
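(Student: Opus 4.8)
The plan is to obtain this corollary as the special case of Corollary \ref{c:quotbar} in which $Z=\cA$ and the generating family $S_0(Z)$ is taken to be $S^*(\cA)$, the continuous $C^*$-seminorms of $\cA$. First I would record the structural facts already available: by Subsection \ref{ss:lcalhm} a locally $C^*$-algebra $\cA$ is an admissible space whose topology is generated by $S^*(\cA)$, and $S^*(\cA)\subseteq S_*(\cA)$, so $S^*(\cA)$ is a legitimate family to play the role of $S_0(Z)$. Likewise, by Examples \ref{e:evhs}(6), a (pre-)locally Hilbert $C^*$-module over $\cA$ is a (topological) VE-space over $\cA$, and a locally Hilbert $C^*$-module is in fact a VH-space.

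The key step, and the one requiring input from outside the self-contained machinery of this article, is to verify the hypothesis of Corollary \ref{c:quotbar}, namely that each quotient normed VE-space $\cE_p$, $p\in S^*(\cA)$, is barreled. For this I would identify the VE-space quotient $\cE_p=\cE/\tl{I_p}$ produced by Lemma \ref{l:vequot} with the Hilbert $C^*$-module quotient of $\cE$ over the $C^*$-algebra $\cA_p=\cA/\ker p$. The gramian descends to $\cE_p$ by Lemma \ref{l:vequot}, and since it also respects the right module action, $\cE_p$ is a pre-Hilbert $\cA_p$-module; by the cited results of \cite{Apostol}, \cite{Phillips}, \cite{Zhuraev}, $\cA_p$ is a $C^*$-algebra and $\cE_p$ is a \emph{complete} Hilbert $\cA_p$-module. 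In particular $\cE_p$ is a Banach space, hence barreled. This identification, together with the completeness that it supplies, is the main obstacle; everything afterwards is a direct application of results already proven.

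With the hypothesis in place, Corollary \ref{c:quotbar} immediately yields the first assertion $\cL^*(\cE,\cF)=\cL_b^*(\cE,\cF,S^*(\cA))$ for any pre-locally Hilbert $C^*$-module $\cF$, since such an $\cF$ is in particular a topological VE-space over $\cA$. Taking $\cF=\cE$ then gives $\cL^*(\cE)=\cL_b^*(\cE,S^*(\cA))$. Because $\cE$ is a VH-space and $\cA$ is admissible, Corollary \ref{c:bddadjloc} applies with $S^*(\cA)$ in the role of $S_0(Z)$ and shows that this space is a locally $C^*$-algebra, which proves the second assertion.

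Finally, when $\cF$ is itself a locally Hilbert $C^*$-module, it too is a VH-space over $\cA$, so Corollary \ref{c:bddadjhilb} applies with $S^*(\cA)$ in the role of $S_0(Z)$ and exhibits $\cL_b^*(\cE,\cF,S^*(\cA))$ as a locally Hilbert $C^*$-module over the locally $C^*$-algebra $\cL_b^*(\cE,S^*(\cA))$. Combining this with the two identifications $\cL^*(\cE,\cF)=\cL_b^*(\cE,\cF,S^*(\cA))$ and $\cL^*(\cE)=\cL_b^*(\cE,S^*(\cA))$ from the previous paragraph gives the final assertion verbatim.
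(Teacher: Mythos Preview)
Your proposal is correct and follows essentially the same route as the paper: verify the barreledness hypothesis of Corollary~\ref{c:quotbar} by identifying each quotient $\cE_p$ with a Hilbert $C^*$-module over the $C^*$-algebra $\cA_p$ (citing \cite{Apostol}, \cite{Phillips}, \cite{Zhuraev} for completeness, hence barreledness), then apply Corollary~\ref{c:quotbar} for the first assertion and combine with Corollaries~\ref{c:bddadjloc} and~\ref{c:bddadjhilb} for the remaining two. The paper's proof is terser but structurally identical.
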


\begin{proof}
In order to prove the first statement, note that $\cE$ is a VH-space in particular and $\cF$ is a topologically VE-space over the admissible space $\cA$, see Examples \ref{e:evhs}, then use Corollary \ref{c:quotbar} with $S_0(\cA)=S^*(\cA)$. For the second statement, combine Corollary \ref{c:bddadjloc} and the first statement. Finally, for the third statement, combine Corollary \ref{c:bddadjhilb} and first statement. 
\end{proof}


\section{Dilation Theory of Barreled VH-Spaces}

In this section we pick an implication of Theorem \ref{t:adopbar} 
on dilations of barreled VH-spaces, as an application of 
Theorem \ref{t:adopbar}. More precisely, we will show that when barreled VH-spaces are considered, an operator boundedness condition from a characterization of the existence of invariant VH-space linearizations (equivalently, existence of reproducing kernel VH-spaces with $*$-representations) of kernels valued in operators of VH-spaces, see Theorem \ref{t:vhinvkolmo2}, is satisfied automatically. 

First we recall some definitions from \cite{AyGheondea2} that will be necessary in the remainder of the paper, mainly for the convenience of the reader. For detailed discussions of these definitons and related results we refer to \cite{AyGheondea2}.

Let $X$ be a nonempty set and let $\cE$ be a VE-space over the ordered 
$*$-space $Z$. 
A map $\fk\colon X\times X\ra \cL(\cE)$ is called a \emph{kernel} on $X$ and 
valued in 
$\cL(\cE)$. In case the kernel $\fk$ has all its values in $\cL^*(\cE)$, 
an \emph{adjoint} kernel 
$\fk^*\colon X\times X\ra\cL^*(\cE)$ can be associated by 
$\fk^*(x,y)=\fk(y,x)^*$ for all 
$x,y\in X$. The kernel $\fk$ is called \emph{Hermitian} if $\fk^*=\fk$. In what follows we will always consider kernels valued in $\cL^*(\cE)$. 

Given $n\in\NN$, the kernel $k$ is called \emph{$n$-positive} if for any 
$x_1,x_2,\ldots,x_n\in X$ and any $h_1,h_2,\ldots,h_n\in\cH$ we have
\begin{equation}\label{e:npos} \sum_{i,j=1}^n [\fk(x_i,x_j)h_j,h_i]_\cE\geq 0.
\end{equation}
 The kernel $k$ is called \emph{positive semidefinite} 
(or \emph{of positive type}) if it is $n$-positive for all natural numbers $n$. A 2-positive kernel is Hermitian, see \cite{Gheondea}.

Given an $\cL^*(\cE)$-valued kernel $\fk$ on a nonempty set $X$, for some 
VE-space $\cE$ on an ordered $*$-space $Z$,  a \emph{VE-space 
linearisation} or, equivalently, a
\emph{VE-space Kolmogorov decomposition} of $\fk$ is, by definition, 
a pair $(\cK;V)$, subject to the following conditions:
  
  \begin{itemize}
  \item[(vel1)] $\cK$ is a VE-space over the same ordered $*$-space $Z$.
  \item[(vel2)] $V\colon X\ra\cL^*(\cE,\cK)$ satisfies $\fk(x,y)=V(x)^*V(y)$ 
for all $x,y\in X$. \end{itemize}
The VE-space linearisation $(\cK;V)$ is called \emph{minimal} if
  \begin{itemize}
  \item[(vel3)] $\lin V(X)\cE=\cK$.
  \end{itemize}
Two VE-space linearisations $(V;\cK)$ and $(V';\cK')$ 
of the same kernel $\fk$ are 
called \emph{unitary equivalent} if there exists a VE-space isomorphism 
$U\colon \cK\ra\cK'$ such that $UV(x)=V'(x)$ for all $x\in X$.
  
A minimal VE-space 
linearisation $(\cK;V)$ of a positive semidefinite kernel $\fk$, 
is unique modulo unitary equivalence, see \cite{AyGheondea1}.

Let $\cE$ be a VE-space over the ordered $*$-space $Z$, and let $X$ be a 
nonempty set. Let $\cF=\cF(X;\cE)$ denote the complex vector space of all functions 
$f\colon X\ra \cE$. A VE-space $\cR$, 
over the same ordered $*$-space 
$Z$, is called an \emph{$\cH$-reproducing kernel VE-space on $X$} 
if there exists a Hermitian kernel $\fk\colon X\times X\ra\cL^*(\cE)$ 
such that the following axioms are satisfied:
\begin{itemize} 
\item[(rk1)] $\cR$ is a subspace of $\cF(X;\cE)$, with all algebraic operations.
\item[(rk2)] For all $x\in X$ and all $h\in\cE$, 
the $\cH$-valued function $\fk_x h=\fk(\cdot,x)h\in\cR$.
\item[(rk3)] For all $f\in\cR$ we have $[f(x),h]_\cE=[f,k_x h]_\cR$, for all 
$x\in X$ and $h\in \cE$.
\end{itemize}
As a consequence of (rk2), $\lin\{\fk_xh\mid x\in X,\ h\in\cE\}\subseteq \cR$.
The reproducing kernel VE-space $\cR$ is called \emph{minimal} if
the following property holds as well:
\begin{itemize}
\item[(rk4)] $\lin\{\fk_xh\mid x\in X,\ h\in \cE\}=\cR$.
\end{itemize}

Let $\Gamma$ be a (multiplicative) $*$-semigroup, that is, there is an 
\emph{involution} $*$ on $\Gamma$: $(\xi\eta)^*=\eta^* \xi^*$ 
and $(\xi^*)^*=\xi$ for all $\xi,\eta\in\Gamma$. Note that, in case $\Gamma$ has a unit $\epsilon$ then 
$\epsilon^*=\epsilon$. Let $\Gamma$ act on a nonempty set $X$,
denoted by $\xi\cdot x$, for all $\xi\in\Gamma$ 
and all $x\in X$. By definition, we have 
\begin{equation}\label{e:action}
\alpha\cdot(\beta\cdot x)=(\alpha\beta)\cdot x\mbox{ for all }\alpha,
\beta\in \Gamma\mbox{ and all }x\in X.\end{equation} 

Given a VE-space $\cE$ we consider those Hermitian kernels 
$\fk\colon X\times X\ra\cL^*(\cE)$ that are \emph{invariant} under the action 
of $\Gamma$ on $X$, that is,
\begin{equation}\label{e:invariant} \fk(y,\xi\cdot x)=\fk(\xi^*\cdot y,x)
\mbox{ for all }x,y\in X\mbox{ and all }\xi\in\Gamma.
\end{equation}
A triple $(\cK;\pi;V)$ is called an \emph{invariant VE-space linearisation} 
of  the kernel $\fk$ and the action of $\Gamma$ on $X$, shortly a
\emph{$\Gamma$-invariant VE-space linearisation} of $\fk$, if:
\begin{itemize}
\item[(ikd1)] $(\cK;V)$ is a VE-space linearisation of the kernel $\fk$.
\item[(ikd2)] $\pi\colon \Gamma\ra\cL^*(\cK)$ is a $*$-representation, that
  is, a multiplicative $*$-morphism.
\item[(ikd3)] $V$ and $\pi$ are related by the formula: 
$V(\xi\cdot x)= \pi(\xi)V(x)$, for all $x\in X$, $\xi\in\Gamma$.
\end{itemize}

If $(\cK;\pi;V)$ is a $\Gamma$-invariant VE-space 
linearisation of the kernel 
$\fk$ then $\fk$ is invariant under the action of $\Gamma$ on $X$.

If, in addition to the axioms (ikd1)--(ikd3), the triple $(\cK;\pi;V)$ 
has the property
\begin{itemize}
\item[(ikd4)] $\lin V(X)\cE=\cK$,
\end{itemize} that is, the VE-space linearisation $(\cK;V)$ is minimal,
then $(\cK;\pi;V)$ is called a \emph{minimal
$\Gamma$-invariant VE-space linearisation} of $\fk$ and the action of 
$\Gamma$ on $X$.

Now let $\cH$ be a VH-space over the admissible space $Z$, and 
consider a kernel $\fk\colon X\times X\ra \cL_c^*(\cH)$. A \emph{VH-space 
linearisation} of $\fk$, or  
\emph{VH-space Kolmogorov decomposition} of $\fk$, is
a pair $(\cK;V)$, subject to the following conditions:
  
  \begin{itemize}
  \item[(vhl1)] $\cK$ is a VH-space over the same ordered $*$-space $Z$.
  \item[(vhl2)] $V\colon X\ra\cL^*_c(\cH,\cK)$ satisfies $\fk(x,y)=V(x)^*V(y)$ 
for all $x,y\in X$. \end{itemize}
The VH-space linearisation $(\cK;V)$ is called \emph{minimal} if
  \begin{itemize}
  \item[(vhl3)] $\lin V(X)\cH$ is dense in $\cK$.
  \end{itemize}

Two VH-space linearisations $(V;\cK)$ and $(V';\cK')$ 
of the same kernel $\fk$ are 
called \emph{unitary equivalent} if there exists a unitary operator 
$U\colon \cK\ra\cK'$ such that $UV(x)=V'(x)$ for all $x\in X$.
  
As in the case for VE-space linearisations, a minimal VH-space 
linearisation $(\cK;V)$ of a positive semidefinite kernel $\fk$, is unique modulo unitary equivalence, 
see \cite{Gheondea}.

A VH-space $\cR$ over the ordered $*$-space 
$Z$ is called an \emph{$\cH$-reproducing kernel VH-space on $X$} 
if there exists a Hermitian kernel $\fk\colon X\times X\ra\cL^*_c(\cH)$ 
such that the following axioms are satisfied:
\begin{itemize} 
\item[(rk1)] $\cR$ is a subspace of $\cF(X;\cH)$, with all algebraic operations.
\item[(rk2)] For all $x\in X$ and all $h\in\cH$, 
the $\cH$-valued function $\fk_x h=\fk(\cdot,x)h\in\cR$.
\item[(rk3)] For all $f\in\cR$ we have $[f(x),h]_\cH=[f,\fk_x h]_\cR$, for all 
$x\in X$ and $h\in \cH$.
\item[(rk4)] For all $x\in X$ the evaluation operator $\cR\ni f\mapsto f(x)\in
\cH$ is continuous.
\end{itemize}

We now describe invariant linearisations for VH-spaces. Let $\cH$ be a VH-space over an admissible space $Z$, let $\fk\colon X\times
X\ra\cL_c^*(\cH)$ be a kernel on some nonempty set $X$, and let $\Gamma$ be a
$*$-semigroup that acts at left on $X$. As in the case of VE-space operator
valued kernels, we call $\fk$ \emph{$\Gamma$-invariant} if
\begin{equation}\fk(\xi\cdot x,y)=\fk(x,\xi^*\cdot y),\quad
  \xi\in\Gamma,\ x,y\in X.
\end{equation} 
A triple $(\cK;\pi;V)$ is called
a \emph{$\Gamma$-invariant VH-space linearisation} for $\fk$ if
\begin{itemize}
\item[(ihl1)] $(\cK;V)$ is a VH-space linearisation of $\fk$.
\item[(ihl2)] $\pi\colon \Gamma\ra\cL^*_c(\cK)$ is a $*$-representation.
\item[(ihl3)] $V(\xi\cdot x)=\pi(\xi)V(x)$ for all $\xi\in\Gamma$ and all $x\in X$.
\end{itemize}
Also, $(\cK;\pi;V)$ is \emph{minimal} if the VH-space linearisation $(\cK;V)$ is
minimal, that is, $\cK$ is the closure of the linear span of $V(X)\cH$.

The following theorem is Theorem 2.10 from \cite{AyGheondea2}, see also the Remark 2.11 following it. It characterizes the existence of invariant VH-space linearisations, equivalently, existence of reproducing kernel VH-spaces (with $*$-representations of the underlying $*$-semigroup) of kernels valued in the class $\cL_{c}^*(\cH)$ of a VH-space $\cH$. 

\begin{theorem} \label{t:vhinvkolmo2} 
Let $\Gamma$ be a $*$-semigroup that acts 
on the nonempty set $X$ and let $\fk\colon X\times X\ra\cL_{c}^*(\cH)$ 
be a kernel, 
for some VH-space $\cH$ over an admissible space $Z$. Let $S_0(Z)\subseteq S(Z)$ be a family of seminorms generating the topology of $Z$.
Then the following assertions are equivalent:

\begin{itemize}
\item[(1)] $\fk$ is positive semidefinite, in the sense of \eqref{e:npos}, 
and invariant under the action of $\Gamma$ on $X$, that is, 
\eqref{e:invariant} holds, and, in addition, the following conditions hold:
\begin{itemize}
\item[(b1)] For any $\xi \in \Gamma$ and 
any seminorm $p \in S_0(Z)$, there exists a seminorm 
$q \in S(Z)$ and 
a constant $c_p(\xi)\geq 0$ such that for all $n \in \NN$,
$\{h_i \}_{i=1}^{n} \in \cH$, $\{ x_i \}_{i=1}^{n} \in X$ we have
\begin{equation*}
p ( \sum_{i,j=1}^{n} [ \fk(\xi\cdot x_i, \xi\cdot x_j)h_j, h_i ]_{\cH} )
\leq c_p(\xi)\,  q( \sum_{i,j=1}^{n} 
[ \fk(x_i, x_j)h_j, h_i ]_{\cH} ). 
\end{equation*} 
\item[(b2)] For any $x \in X$ and any seminorm $p \in S_0(Z)$, 
 there exists 
a seminorm 
$q \in S(Z)$ and 
a constant $c_p(x)\geq 0$ such that for all 
$n \in \NN$, $\{ y_i \}_{i=1}^{n} \in X$, 
$\{ h_i \}_{i=1}^{n} \in \cH$ we have
\begin{equation*}
p ( \sum_{i,j=1}^{n} [ \fk(x,y_i)h_i, \fk(x, y_j)h_j ]_{\cH} )
\leq c_p(x)\,  q ( \sum_{i,j=1}^{n} 
[ \fk(y_j, y_i)h_i, h_j ]_{\cH} ). 
\end{equation*}
\end{itemize}
\item[(2)] $\fk$ has a $\Gamma$-invariant VH-space
linearisation $(\cK;\pi;V)$.
\item[(3)] $\fk$ admits an $\cH$-reproducing kernel VH-space $\cR$ and 
there exists a $*$-representation $\rho\colon \Gamma\ra\cL_{c}^*(\cR)$ such that 
$\rho(\xi)\fk_xh=\fk_{\xi\cdot x}h$ for all $\xi\in\Gamma$, $x\in X$, $h\in\cH$.
\end{itemize}

In addition, in case any of the assertions \emph{(1)}, \emph{(2)}, or
\emph{(3)} holds,  
then a minimal\, $\Gamma$-invariant VH-space linearisation of $\fk$ can be 
constructed, any minimal $\Gamma$-invariant VH-space linearisation of $\fk$ 
is unique up to unitary equivalence, 
and the pair $(\cR;\rho)$ as in assertion \emph{(3)} is uniquely determined by 
$\fk$ as well.
\end{theorem}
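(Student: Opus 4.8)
The plan is to prove the cycle of implications $(1)\Rightarrow(2)\Rightarrow(3)\Rightarrow(1)$ and then to treat the minimality and uniqueness addendum separately. The implication $(1)\Rightarrow(2)$ is the construction step, which I would carry out by a Kolmogorov-type decomposition adapted to the VH-space setting. Let $\cK_0$ be the algebraic vector space of finitely supported $\cH$-valued functions on $X$, with canonical generators $\delta_x h$ ($x\in X$, $h\in\cH$), and define a candidate $Z$-valued gramian on $\cK_0$ by
\begin{equation*}
[\delta_x h,\delta_y g]_0:=[h,\fk(x,y)g]_\cH,\qquad x,y\in X,\ h,g\in\cH,
\end{equation*}
extended sesquilinearly. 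Positive semidefiniteness of $\fk$ in the sense of \eqref{e:npos}, together with the fact that $Z_+$ consists of selfadjoint elements, gives $[u,u]_0\geq0$ for all $u\in\cK_0$; using the Schwarz inequality of Lemma \ref{l:schwarz} one sees that $\cN:=\{u\in\cK_0\mid[u,u]_0=0\}$ is a subspace, and I would pass to the VE-space $\cK_0/\cN$, topologise it as in \eqref{e:qujeh}, and complete it to a VH-space $\cK$ over $Z$. Setting $V(x)h$ to be the class of $\delta_x h$, a computation on generators (using that $\fk$ is Hermitian, being positive semidefinite) gives $V(x)^*V(y)=\fk(x,y)$; defining $\pi(\xi)$ on generators by sending the class of $\delta_x h$ to the class of $\delta_{\xi\cdot x}h$ gives a candidate representation, and (ihl3) is then immediate.

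The conditions (b1) and (b2) enter precisely where the topological requirements must be checked. Rewriting its two sides through $[\cdot,\cdot]_0$, condition (b2) is seen to be exactly the estimate $\tl{p}_\cH\!\big(\sum_i\fk(x,y_i)h_i\big)\leq c_p(x)^{1/2}\,\tl{q}_\cK\!\big(\sum_i V(y_i)h_i\big)$, which says that the map $\sum_i V(y_i)h_i\mapsto\sum_i\fk(x,y_i)h_i$ is well defined modulo $\cN$ and continuous on the dense span, hence extends continuously to all of $\cK$; one checks that this extension is $V(x)^*$ and, combined with continuity of $V(x)$ itself (which follows from $\fk$ being $\cL_c^*(\cH)$-valued and Lemma \ref{l:schwarz}), one gets $V(x)\in\cL_c^*(\cH,\cK)$. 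Likewise (b1) rewrites as $\tl{p}_\cK(\pi(\xi)u)\leq c_p(\xi)^{1/2}\tl{q}_\cK(u)$ on the dense span, so $\pi(\xi)$ is well defined on $\cK_0/\cN$ and extends to a continuous operator on $\cK$; multiplicativity $\pi(\xi\eta)=\pi(\xi)\pi(\eta)$ follows from \eqref{e:action}, the identity $\pi(\xi^*)=\pi(\xi)^*$ from invariance \eqref{e:invariant}, and one checks $\pi(\xi)\in\cL_c^*(\cK)$, so $(\cK;\pi;V)$ is a $\Gamma$-invariant VH-space linearisation.

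For $(2)\Rightarrow(3)$ I would transport the structure along $W\colon\cK\to\cF(X;\cH)$, $(Wk)(x):=V(x)^*k$. On the closed subspace $\cK_0':=\ol{\lin V(X)\cH}$ this map is injective (if $k\in\cK_0'$ and $Wk=0$ then $k\perp\cK_0'$, so $[k,k]=0$ and $k=0$), and I would take $\cR:=W(\cK_0')$ with the gramian pulled back from $\cK_0'$; since $W(V(x)h)=\fk_xh$ one gets $\cR=\ol{\lin\{\fk_xh\mid x\in X,h\in\cH\}}$. Axioms (rk1)--(rk3) follow from $V(x)^*V(y)=\fk(x,y)$, (rk4) from continuity of each $V(x)^*$, and, since $\pi(\xi)$ preserves $\cK_0'$, setting $\rho(\xi):=W\pi(\xi)W^{-1}$ gives a $*$-representation into $\cL_c^*(\cR)$ with $\rho(\xi)\fk_xh=\fk_{\xi\cdot x}h$. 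For $(3)\Rightarrow(1)$, applying (rk3) to $f=\fk_yg$ gives $[\fk(x,y)g,h]_\cH=[\fk_yg,\fk_xh]_\cR$, whence positive semidefiniteness (writing the sum in \eqref{e:npos} as $[\sum_i\fk_{x_i}h_i,\sum_i\fk_{x_i}h_i]_\cR$) and, via $\rho$, invariance; condition (b1) then follows by writing its left-hand side as $\tl{p}_\cR(\rho(\xi)u)^2$ with $u=\sum_i\fk_{x_i}h_i$ and using continuity of $\rho(\xi)\in\cL_c^*(\cR)$, while (b2) follows by writing its left-hand side as $\tl{p}_\cH\!\big(E_x f\big)^2$ with $f=\sum_i\fk_{y_i}h_i$ and using continuity of the evaluation operator $E_x\colon f\mapsto f(x)$ from (rk4).

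Finally, the linearisation produced in $(1)\Rightarrow(2)$ is minimal by construction, and any minimal $\Gamma$-invariant VH-space linearisation $(\cK';\pi';V')$ of $\fk$ is unitarily equivalent to it: the assignment $\sum_i V(x_i)h_i\mapsto\sum_i V'(x_i)h_i$ is well defined and isometric because both sides compute the gramian $\sum_{i,j}[h_i,\fk(x_i,x_j)h_j]_\cH$, extends to a unitary $U$ by density, and intertwines $V$ with $V'$ and $\pi$ with $\pi'$; uniqueness of the constructed $(\cR;\rho)$ follows by transporting this along $W$. I expect the main obstacle to be the topological bookkeeping rather than the algebra: translating (b1) and (b2) into the seminorm estimates above while keeping track of the square roots and of the constant $4$ in Lemma \ref{l:schwarz}, confirming well-definedness of $\pi(\xi)$ and of $V(x)^*$ modulo $\cN$, and — since VH-spaces have no Riesz representation and need not admit orthogonal projections — verifying that the relevant maps land in $\cL_c^*$ and not merely in $\cL^*$, and that the restriction to $\cK_0'$ in the $(2)\Rightarrow(3)$ step is what makes the transport along $W$ legitimate.
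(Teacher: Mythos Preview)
The paper does not prove this theorem at all: it is quoted verbatim as Theorem~2.10 of \cite{AyGheondea2} (with a pointer to Remark~2.11 there) and used as a black box on which Theorem~\ref{t:barvhinvkolmo} is built. So there is no ``paper's own proof'' to compare against; your proposal is a reconstruction of the argument from the cited source.

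That said, your outline is the standard one and matches what is done in \cite{AyGheondea2}: the Kolmogorov-type quotient-and-complete construction for $(1)\Rightarrow(2)$, with (b1) and (b2) supplying exactly the seminorm estimates needed to push $\pi(\xi)$ and $V(x)^*$ from the dense span to $\cK$ as elements of $\cL_c^*$; the transport along $k\mapsto V(\cdot)^*k$ for $(2)\Rightarrow(3)$, restricted to $\ol{\lin V(X)\cH}$ to force injectivity; and reading off positive semidefiniteness, invariance, (b1), (b2) from the reproducing-kernel data for $(3)\Rightarrow(1)$. The minimality/uniqueness addendum via the isometric extension of $\sum V(x_i)h_i\mapsto\sum V'(x_i)h_i$ is likewise the standard argument. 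Your self-identified obstacles (well-definedness modulo $\cN$, landing in $\cL_c^*$ rather than $\cL^*$ in the absence of a Riesz representation, and the need to restrict to $\cK_0'$) are precisely the points where the cited proof does the work, and your handling of them is correct.
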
 

In the final part of the paper, we show that, when barreled VH-spaces are considered, condition (b2) of Theorem \ref{t:vhinvkolmo2} is automatically satisfied. Precisely, we have the following theorem:

\begin{theorem} \label{t:barvhinvkolmo} 
Let $\Gamma$ be a $*$-semigroup that acts 
on the nonempty set $X$ and let $\fk\colon X\times X\ra\cL^*(\cH)$ 
be a kernel, 
for a barreled VH-space $\cH$ over an admissible space $Z$. Let $S_0(Z)\subseteq S_*(Z)$ be a family of seminorms generating the topology of $Z$. 
Then the following assertions are equivalent:

\begin{itemize}
\item[(1)] $\fk$ is positive semidefinite, in the sense of \eqref{e:npos}, 
and invariant under the action of $\Gamma$ on $X$, that is, 
\eqref{e:invariant} holds, and, in addition, the following condition hold:
\begin{itemize}
\item[(b1)] For any $\xi \in \Gamma$ and 
any seminorm $p \in S_0(Z)$, there exists a seminorm 
$q \in S(Z)$ and 
a constant $c_p(\xi)\geq 0$ such that for all $n \in \NN$,
$\{h_i \}_{i=1}^{n} \in \cH$, $\{ x_i \}_{i=1}^{n} \in X$ we have
\begin{equation*}
p ( \sum_{i,j=1}^{n} [ \fk(\xi\cdot x_i, \xi\cdot x_j)h_j, h_i ]_{\cH} )
\leq c_p(\xi)\,  q( \sum_{i,j=1}^{n} 
[ \fk(x_i, x_j)h_j, h_i ]_{\cH} ). 
\end{equation*} 
\end{itemize}
\item[(2)] $\fk$ has a $\Gamma$-invariant VH-space
linearisation $(\cK;\pi;V)$.
\item[(3)] $\fk$ admits an $\cH$-reproducing kernel VH-space $\cR$ and 
there exists a $*$-representation $\rho\colon \Gamma\ra\cL_{c}^*(\cR)$ such that 
$\rho(\xi)\fk_xh=\fk_{\xi\cdot x}h$ for all $\xi\in\Gamma$, $x\in X$, $h\in\cH$.
\end{itemize}

In addition, in case any of the assertions \emph{(1)}, \emph{(2)}, or
\emph{(3)} holds,  
then a minimal\, $\Gamma$-invariant VH-space linearisation of $\fk$ can be 
constructed, any minimal $\Gamma$-invariant VH-space linearisation of $\fk$ 
is unique up to unitary equivalence, 
and the pair $(\cR;\rho)$ as in assertion \emph{(3)} is uniquely determined by 
$\fk$ as well.
\end{theorem}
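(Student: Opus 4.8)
The plan is to deduce Theorem \ref{t:barvhinvkolmo} from Theorem \ref{t:vhinvkolmo2} by showing that the condition (b2) occurring in the latter is redundant when $\cH$ is barreled. First I would note that, since $\cH$ is a barreled VH-space, Theorem \ref{t:adopbar} together with Remark \ref{r:opcleq} gives $\cL^{*}(\cH)=\cL^{*}_{\mathrm c}(\cH)=\cL^{*}_{b}(\cH)$, so the kernel $\fk$, being valued in $\cL^{*}(\cH)$, is in particular valued in $\cL^{*}_{\mathrm c}(\cH)$, and hence Theorem \ref{t:vhinvkolmo2} applies to $\fk$ with the given generating family $S_0(Z)\subseteq S_*(Z)\subseteq S(Z)$. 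Since condition (b1) and assertions (2) and (3) are word-for-word the same in the two statements, and the concluding uniqueness and minimality assertions are inherited verbatim, it suffices to prove the following claim: if $\cH$ is a barreled VH-space and $\fk\colon X\times X\ra\cL^{*}(\cH)$ is positive semidefinite, then condition (b2) of Theorem \ref{t:vhinvkolmo2} holds automatically. Granting the claim, assertion (1) of Theorem \ref{t:barvhinvkolmo} becomes equivalent to assertion (1) of Theorem \ref{t:vhinvkolmo2}, and the equivalence (1)$\Leftrightarrow$(2)$\Leftrightarrow$(3) together with the remaining conclusions follows at once from Theorem \ref{t:vhinvkolmo2}.

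To prove the claim I would work with the minimal VE-space linearisation $(\cK_0;V_0)$ of the positive semidefinite kernel $\fk$, which exists and is unique up to unitary equivalence, cf. \cite{AyGheondea1}; thus $\fk(x,y)=V_0(x)^{*}V_0(y)$ for all $x,y\in X$, $V_0(x)\in\cL^{*}(\cH,\cK_0)$, and, from the standard construction of the Kolmogorov decomposition, on the generators the gramian of $\cK_0$ is given by $[\,\sum_i V_0(y_i)h_i,\ \sum_j V_0(y_j)h_j\,]_{\cK_0}=\sum_{i,j}[\fk(y_j,y_i)h_i,h_j]_{\cH}$. Fix $x\in X$. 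The operator $\fk(x,x)=V_0(x)^{*}V_0(x)\in\cL^{*}(\cH)$ is positive, and since $\cH$ is barreled Theorem \ref{t:adopbar} yields $\fk(x,x)\in\cL^{*}_{b}(\cH,S_0(Z))$. Using $[V_0(x)h,V_0(x)h]_{\cK_0}=[\fk(x,x)h,h]_{\cH}$ and the Schwarz inequality of Lemma \ref{l:schwarzforposop} for the positive operator $\fk(x,x)$, I obtain, for every $p\in S_0(Z)$ and $h\in\cH$,
\begin{equation*}
\tl{p}(V_0(x)h)^{2}=p([\fk(x,x)h,h]_{\cH})\le \tl{p}(\fk(x,x)h)\,\tl{p}(h)\le \ol{p}(\fk(x,x))\,\tl{p}(h)^{2},
\end{equation*}
so that $V_0(x)\in\cL^{*}_{b}(\cH,\cK_0,S_0(Z))$ with $\ol{p}(V_0(x))\le\ol{p}(\fk(x,x))^{1/2}$. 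By Lemma \ref{l:autobd}, the adjoint satisfies $V_0(x)^{*}\in\cL^{*}_{b}(\cK_0,\cH,S_0(Z))$ with $\ol{p}(V_0(x)^{*})=\ol{p}(V_0(x))$.

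To finish, fix $x\in X$, $p\in S_0(Z)$, and finite families $\{y_i\}_{i=1}^{n}\subset X$, $\{h_i\}_{i=1}^{n}\subset\cH$, and put $v:=\sum_{i=1}^{n}V_0(y_i)h_i\in\cK_0$, so that $V_0(x)^{*}v=\sum_{i=1}^{n}\fk(x,y_i)h_i$. Then
\begin{align*}
p\Bigl(\sum_{i,j=1}^{n}[\fk(x,y_i)h_i,\fk(x,y_j)h_j]_{\cH}\Bigr)
&= p\bigl([V_0(x)^{*}v,V_0(x)^{*}v]_{\cH}\bigr)
= \tl{p}(V_0(x)^{*}v)^{2} \\
&\le \ol{p}(V_0(x)^{*})^{2}\,\tl{p}(v)^{2}
= \ol{p}(V_0(x))^{2}\,p\bigl([v,v]_{\cK_0}\bigr) \\
&\le \ol{p}(\fk(x,x))\; p\Bigl(\sum_{i,j=1}^{n}[\fk(y_j,y_i)h_i,h_j]_{\cH}\Bigr),
\end{align*}
which is precisely (b2) with $q:=p\in S(Z)$ and $c_p(x):=\ol{p}(\fk(x,x))$. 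I expect the main obstacle to be the transfer of boundedness: Theorem \ref{t:adopbar} directly gives only that $\fk(x,x)$ is bounded, while the linearisation space $\cK_0$ need not be barreled, so Lemma \ref{l:autobd} cannot be applied to $\cK_0$ a priori (and the more naive attempt to dominate the ``column'' operator by the kernel matrix in the operator order on $\cH^{n}$ fails for the same reason); the point is to first extract boundedness of $V_0(x)$ from that of $\fk(x,x)$ via Lemma \ref{l:schwarzforposop}, and only then invoke Lemma \ref{l:autobd} to pass to $V_0(x)^{*}$. Care is also needed with the index and involution conventions in the gramian formula of the linearisation, and with the fact that throughout ``bounded'' means $S_0(Z)$-bounded in the sense of Section \ref{s:cclo}, not order bounded in the sense of \eqref{e:bounded}.
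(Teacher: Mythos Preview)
Your proof is correct and follows the same overall strategy as the paper: reduce to Theorem \ref{t:vhinvkolmo2} by showing that (b2) is automatic when $\cH$ is barreled, using Theorem \ref{t:adopbar} to obtain boundedness of $\fk(x,x)$. The difference lies in how (b2) is then derived. The paper observes that boundedness of $\fk(x,x)$ makes it m-topologisable (in the sense of \eqref{e:mtop}) and then simply invokes Proposition~2.17 of \cite{AyGheondea2}, which asserts that m-topologisability of the diagonal values implies (b2). You instead give a direct, self-contained verification: pass to the minimal VE-space linearisation $(\cK_0;V_0)$, use Lemma \ref{l:schwarzforposop} to transfer the $p$-boundedness of $\fk(x,x)=V_0(x)^*V_0(x)$ to $V_0(x)$, then Lemma \ref{l:autobd} to pass to $V_0(x)^*$, and read off (b2) with $q=p$. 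Your route avoids the external citation and in fact yields the slightly sharper conclusion that $q$ may be taken equal to $p$ with explicit constant $c_p(x)=\ol{p}(\fk(x,x))$; the paper's route is shorter on the page but depends on machinery developed elsewhere. The one point you might make explicit is that $\cK_0$, being a VE-space over the admissible space $Z$, is tacitly endowed with the natural topology of \eqref{e:qujeh}, so that the seminorms $\tl p$ on $\cK_0$ and the hypotheses of Lemmas \ref{l:schwarzforposop} and \ref{l:autobd} are available.
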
 

Before the proof of this theorem, let us recall the definition of an m-topologisable operator: For a topological VE-space $\cE$ over the topologically 
ordered $*$-space $Z$, following \cite{Zelazko}, also see \cite{Bonet}, an operator $T\in \cL(\cE)$ 
is called \emph{m-topologisable} if for every $p\in S_0(Z)$, where $S_0(Z)\subseteq S(Z)$ is an arbitrary generating family of seminorms, there exists 
a constant $D_p\geq 0$ and a continuous seminorm $r$ on $\cE$ 
such that, for every $n \in \NN$ and every $h\in \cE$,
\begin{equation}\label{e:mtop}
\tl{p}(T^n h)=p([T^nh,T^nh])^{\frac{1}{2}} \leq D_p^n\, r(h).
\end{equation}
It is easy to see that the definition does not depend on the particular family of seminorms $S_0(Z)$ chosen. 

Let us observe that, if $T\in\cL_b(\cE)$, then for every $n \in \NN$, $p\in S_*(Z)$ and $h\in \cE$ we have 
\begin{align*}
\tl{p}(T^n h)&=p([T^nh,T^nh])^{\frac{1}{2}} \leq C_p\, p([T^{n-1}h,T^{n-1}h])^{\frac{1}{2}} \\
&\leq C_p^2\, p([T^{n-2}h,T^{n-2}h])^{\frac{1}{2}} \leq \dots \leq C_p^n\, p(h,h])^{\frac{1}{2}} = C_p^n\, \tl{p}(h)
\end{align*} 
for some constant $C_p \geq 0$, hence $T$ is m-topologisable. 

\begin{proof}
Using barreledness of $\cH$, by Remark \ref{r:opcleq} we have $\cL^*_c(\cH)=\cL^*_b(\cH)=\cL^*(\cH)$. Therefore, in particular, $\fk(x,x)\in\cL^*(\cH)$ is m-topologizable for any $x\in X$. By Proposition 2.17 in \cite{AyGheondea2}, in this case, condition (b2) is automatically satisfied and the theorem follows.
\end{proof}

\begin{remark}
By Corollary \ref{c:quotbar}, the assumption of barreledness of the VH-space $\cH$ in Theorem \ref{t:barvhinvkolmo} can be replaced by the assumption that, all the quotient normed VE-spaces $\cH_p$ are barreled, for $p\in S_0(Z)$ with $S_0(Z)\subseteq S_*(Z)$ any generating family of seminorms.  
\end{remark}

\begin{remark}
Theorem \ref{t:barvhinvkolmo} provides a partial answer to a question raised in \cite{AyGheondea2}: When the condition (b2) in Theorem \ref{t:vhinvkolmo2} is satisfied automatically? Thus, one answer is when the VH-space $\cH$ is a barreled VH-space, or when the quotient normed VE-spaces $\cH_p$, $p\in S_0(Z)$ are barrelled. A full characterization of the condition (b2) remains an open problem. 
\end{remark}

\begin{remark}
It would be interesting to know if the linearisation spaces $\cK$ and/or $\cR$ in Theorem \ref{t:barvhinvkolmo} are barreled VH-spaces in general. 
\end{remark}

\end{document}